\newtheorem{theorem}{Theorem}
\theoremstyle{plain}
\newtheorem{corollary}{Corollary}
\newtheorem{definition}{Definition}
\newtheorem{lemma}{Lemma}
\newtheorem{remark}{Remark}
\numberwithin{equation}{section}
\begin{document}
\title[On generalization of different type inequalities]{On generalization
of different type inequalities for some convex functions via fractional
integrals}
\author{\.{I}mdat \.{I}\c{s}can}
\address{Department of Mathematics, Faculty of Sciences and Arts, Giresun
University, Giresun, Turkey}
\email{imdat.iscan@giresun.edu.tr}
\subjclass[2000]{ 26A51, 26A33, 26D10, 26D15. }
\keywords{Hermite--Hadamard inequality, Riemann--Liouville fractional
integral, Ostrowski inequality,  Simpson type inequalities,  $s-$convex
function, quasi-convex function, $m-$convex function.}

\begin{abstract}
New identity for fractional integrals have been defined. By using of this
identity, we obtained new estimates on generalization of Hadamard, Ostrowski
and Simpson type inequalities for $s-$convex, quasi-convex, $m-$convex
functions via Riemann Liouville fractional integral.
\end{abstract}

\maketitle

\section{Introduction}

Following inequalities are well known in the literature as Hermite-Hadamard
inequality, Ostrowski inequality and Simpson inequality respectively:

\begin{theorem}
Let $f:I\subseteq \mathbb{R\rightarrow R}$ be a convex function defined on
the interval $I$ of real numbers and $a,b\in I$ with $a<b$. The following
double inequality holds%
\begin{equation}
f\left( \frac{a+b}{2}\right) \leq \frac{1}{b-a}\dint\limits_{a}^{b}f(x)dx%
\leq \frac{f(a)+f(b)}{2}\text{.}  \label{1-1}
\end{equation}
\end{theorem}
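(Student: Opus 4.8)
The plan is to derive both halves of (\ref{1-1}) directly from the two-point definition of convexity, reducing each to an elementary integral over the parameter interval $[0,1]$ after an affine change of variables. Throughout I would write $x=ta+(1-t)b$ with $t\in[0,1]$, so that $x$ runs from $b$ to $a$ and the two sign changes (from flipping the limits and from $dx=(a-b)\,dt$) cancel, giving
\begin{equation*}
\frac{1}{b-a}\dint\limits_{a}^{b}f(x)\,dx=\dint\limits_{0}^{1}f\big(ta+(1-t)b\big)\,dt.
\end{equation*}
By the symmetry of the substitution the same quantity also equals $\int_{0}^{1}f\big((1-t)a+tb\big)\,dt$, a fact I will use for the left-hand bound.

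For the right inequality I would apply convexity pointwise in $t$: since $f$ is convex, $f\big(ta+(1-t)b\big)\leq tf(a)+(1-t)f(b)$ for every $t\in[0,1]$. Integrating over $[0,1]$ and using $\int_{0}^{1}t\,dt=\int_{0}^{1}(1-t)\,dt=\tfrac{1}{2}$ yields
\begin{equation*}
\dint\limits_{0}^{1}f\big(ta+(1-t)b\big)\,dt\leq \frac{f(a)+f(b)}{2},
\end{equation*}
which is exactly the right half of (\ref{1-1}) once the left side is identified with $\frac{1}{b-a}\int_{a}^{b}f(x)\,dx$.

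For the left inequality I would exploit that $\frac{a+b}{2}$ is the midpoint of the two symmetric points $ta+(1-t)b$ and $(1-t)a+tb$. Applying convexity to this midpoint gives, for each fixed $t$,
\begin{equation*}
f\left( \frac{a+b}{2}\right) \leq \frac{1}{2}\Big[f\big(ta+(1-t)b\big)+f\big((1-t)a+tb\big)\Big].
\end{equation*}
Integrating over $t\in[0,1]$ and recognizing both resulting integrals as $\frac{1}{b-a}\int_{a}^{b}f(x)\,dx$, via the two versions of the change of variables noted above, completes this bound.

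I do not anticipate a serious obstacle; the only point requiring genuine care is the bookkeeping in the change of variables, in particular the orientation of the limits as $t$ runs from $0$ to $1$. An alternative route for the left inequality is the supporting-line argument — integrating a tangent line at the midpoint and using $\int_{a}^{b}\big(x-\frac{a+b}{2}\big)\,dx=0$ — but that approach presupposes the existence of a supporting line, so I would prefer the self-contained parametrization above, which relies on nothing beyond the definition of convexity and the linearity of the integral.
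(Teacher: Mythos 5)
Your proof is correct. Note, however, that the paper itself offers no proof of this statement: it is quoted in the introduction as the classical Hermite--Hadamard inequality, a known result, so there is no argument in the paper to compare yours against. What you give is the standard textbook derivation, and it is sound: the change of variables $x=ta+(1-t)b$ is handled correctly (the reversal of limits and the factor $dx=(a-b)\,dt$ do cancel), the right half follows from integrating the defining inequality of convexity, and the left half follows from applying midpoint convexity to the symmetric pair $ta+(1-t)b$ and $(1-t)a+tb$ and recognizing both integrals as $\frac{1}{b-a}\int_a^b f(x)\,dx$. Your preference for this parametrization over the supporting-line argument is reasonable, since it uses nothing beyond the definition of convexity; the only point you leave tacit is the integrability of $f$ on $[a,b]$, which is harmless here (a convex function is continuous on the interior of $I$ and bounded on $[a,b]$, hence integrable, and the statement presupposes the integral exists in any case).
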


\begin{theorem}
Let $f:I\subseteq \mathbb{R\rightarrow R}$ be a mapping differentiable in $%
I^{\circ },$ the interior of I, and let $a,b\in I^{\circ }$ with $a<b.$ If $%
\left\vert f^{\prime }(x)\right\vert \leq M,$ $x\in \left[ a,b\right] ,$
then we the following inequality holds%
\begin{equation*}
\left\vert f(x)-\frac{1}{b-a}\dint\limits_{a}^{b}f(t)dt\right\vert \leq 
\frac{M}{b-a}\left[ \frac{\left( x-a\right) ^{2}+\left( b-x\right) ^{2}}{2}%
\right]
\end{equation*}%
for all $x\in \left[ a,b\right] .$ The constant $\frac{1}{4}$ is the best
possible in the sense that it cannot be replaced by a smaller one.
\end{theorem}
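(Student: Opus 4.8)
The plan is to represent the difference $f(x)-\frac{1}{b-a}\int_a^b f(t)\,dt$ as a single integral against a suitable Peano kernel — the Montgomery identity — and then estimate that integral crudely using $|f'|\le M$. Concretely, I would introduce
\begin{equation*}
p(x,t)=\begin{cases} t-a, & a\le t\le x,\\ t-b, & x<t\le b,\end{cases}
\end{equation*}
and claim that
\begin{equation*}
f(x)-\frac{1}{b-a}\int_a^b f(t)\,dt=\frac{1}{b-a}\int_a^b p(x,t)f'(t)\,dt.
\end{equation*}
The verification is a routine integration by parts performed separately on $[a,x]$ and $[x,b]$: the first piece gives $\int_a^x (t-a)f'(t)\,dt=(x-a)f(x)-\int_a^x f(t)\,dt$, the second gives $\int_x^b (t-b)f'(t)\,dt=(b-x)f(x)-\int_x^b f(t)\,dt$; adding these and dividing by $b-a$ collapses the boundary terms to $f(x)$ and recovers the averaged integral.

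Next I would take absolute values and push them inside the integral:
\begin{equation*}
\left|f(x)-\frac{1}{b-a}\int_a^b f(t)\,dt\right|\le\frac{1}{b-a}\int_a^b |p(x,t)|\,|f'(t)|\,dt\le\frac{M}{b-a}\int_a^b |p(x,t)|\,dt,
\end{equation*}
where the hypothesis $|f'|\le M$ enters in the last step. It then only remains to evaluate $\int_a^b |p(x,t)|\,dt$. Since $t-a\ge 0$ on $[a,x]$ and $t-b\le 0$ on $[x,b]$, this splits as $\int_a^x (t-a)\,dt+\int_x^b (b-t)\,dt=\frac{(x-a)^2}{2}+\frac{(b-x)^2}{2}$, which is exactly the bracketed quantity in the statement. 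This establishes the inequality.

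For the sharpness assertion I would first rewrite the right-hand side in centred form. Substituting $x-a=\left(x-\frac{a+b}{2}\right)+\frac{b-a}{2}$ and $b-x=\frac{b-a}{2}-\left(x-\frac{a+b}{2}\right)$ gives
\begin{equation*}
\frac{M}{b-a}\cdot\frac{(x-a)^2+(b-x)^2}{2}=M(b-a)\left[\frac14+\frac{\left(x-\frac{a+b}{2}\right)^2}{(b-a)^2}\right],
\end{equation*}
so the constant $\frac14$ is the value of the coefficient at the midpoint $x=\frac{a+b}{2}$. To see it cannot be lowered, I would examine when the kernel estimate is tight: equality in $\left|\int p\,f'\right|\le\int|p|\,|f'|$ forces $f'(t)=M\,\mathrm{sgn}\,p(x,t)$ almost everywhere, i.e. $f'=M$ on $(a,x)$ and $f'=-M$ on $(x,b)$. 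At $x=\frac{a+b}{2}$ this extremal profile is the tent function $f(t)=M\left(\frac{b-a}{2}-\left|t-\frac{a+b}{2}\right|\right)$, which attains the bound but fails to be differentiable at the single point $x$; smoothing it on a shrinking neighbourhood of $x$ produces admissible differentiable functions whose ratio of the two sides tends to $1$, so no smaller constant is possible.

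Everything up to the inequality itself is mechanical, so the only genuinely delicate point is the sharpness claim — specifically, handling the non-differentiability of the extremal tent function, which must be resolved by an approximation argument rather than by exhibiting an exact maximiser within the class of differentiable functions.
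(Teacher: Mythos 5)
Your proof is correct, but note that the paper itself never proves this statement: it is quoted in the introduction as classical background (the Ostrowski inequality), so there is no paper proof to compare against line by line. What the paper does instead is build a parametrized fractional generalization of exactly the identity you use: its Lemma \ref{2.1}, specialized to $\alpha=1$, $\lambda=0$, becomes (after the change of variables $u=tx+(1-t)a$, resp.\ $u=tx+(1-t)b$) precisely your Montgomery identity $f(x)-\frac{1}{b-a}\int_a^b f(t)\,dt=\frac{1}{b-a}\int_a^b p(x,t)f'(t)\,dt$, and the classical inequality is then recovered in the paper as the $\alpha=1$ case of its Ostrowski-type corollaries (e.g.\ the corollary to Theorem \ref{2.2} with $\lambda=0$, where the bound $\frac{M}{\alpha+1}\bigl[\frac{(x-a)^{\alpha+1}+(b-x)^{\alpha+1}}{b-a}\bigr]$ reduces to $\frac{M}{b-a}\cdot\frac{(x-a)^2+(b-x)^2}{2}$). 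So your kernel argument is the $\alpha=1$, $\lambda=0$ shadow of the paper's machinery; what the paper's extra parameters buy is that the same one identity simultaneously yields midpoint, trapezoid, Simpson and Ostrowski bounds for fractional integrals, while your direct route is shorter and self-contained for this single statement. One remark on the sharpness part, which the paper also omits: your tent-plus-smoothing argument is valid, but you chose the only value of $x$ at which the extremizer is non-differentiable. Taking instead $x=a$ (or $x=b$) with $f(t)=t$ and $M=1$ gives $\bigl|f(a)-\frac{1}{b-a}\int_a^b t\,dt\bigr|=\frac{b-a}{2}$, which equals the right-hand side exactly; since in centred form the coefficient at $x=a$ is $\frac14+\frac14=\frac12$, any constant $c<\frac14$ would force $\frac{b-a}{2}\le (c+\frac14)(b-a)<\frac{b-a}{2}$, a contradiction, with no approximation argument needed. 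So the ``genuinely delicate point'' you flag can be dissolved entirely by moving the test point to an endpoint.
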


\begin{theorem}
Let $f:\left[ a,b\right] \mathbb{\rightarrow R}$ be a four times
continuously differentiable mapping on $\left( a,b\right) $ and $\left\Vert
f^{(4)}\right\Vert _{\infty }=\underset{x\in \left( a,b\right) }{\sup }%
\left\vert f^{(4)}(x)\right\vert <\infty .$ Then the following inequality
holds:%
\begin{equation*}
\left\vert \frac{1}{3}\left[ \frac{f(a)+f(b)}{2}+2f\left( \frac{a+b}{2}%
\right) \right] -\frac{1}{b-a}\dint\limits_{a}^{b}f(x)dx\right\vert \leq 
\frac{1}{2880}\left\Vert f^{(4)}\right\Vert _{\infty }\left( b-a\right) ^{2}.
\end{equation*}
\end{theorem}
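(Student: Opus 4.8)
The plan is to recognize the left-hand side as the classical Simpson quadrature error, normalized by $1/(b-a)$, and to prove it by a repeated-integration argument centered at the midpoint. First I would set $m=\frac{a+b}{2}$ and $h=\frac{b-a}{2}$, and rewrite the bracketed combination as $\frac{1}{6}\left[f(a)+4f(m)+f(b)\right]$, so that the quantity to be bounded is $\left\vert\frac{1}{b-a}\int_a^b f(x)\,dx-\frac{1}{6}\left[f(a)+4f(m)+f(b)\right]\right\vert$. Clearing the factor $b-a=2h$ reduces everything to estimating the symmetric Simpson functional
\begin{equation*}
F(t)=\int_{m-t}^{m+t}f(x)\,dx-\frac{t}{3}\left[f(m-t)+4f(m)+f(m+t)\right],
\end{equation*}
evaluated at $t=h$, since $F(h)=\int_a^b f(x)\,dx-\frac{b-a}{6}\left[f(a)+4f(m)+f(b)\right]$.

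Next I would differentiate $F$ three times, using the Leibniz rule for the variable-endpoint integral. The key computation is that the low-order terms cancel in a cascade: one checks that $F(0)=F'(0)=F''(0)=F'''(0)=0$, and --- this is the crucial simplification --- the third derivative collapses to
\begin{equation*}
F'''(t)=-\frac{t}{3}\left[f'''(m+t)-f'''(m-t)\right].
\end{equation*}
All the bulky terms produced by differentiating the quadrature weights cancel against those coming from differentiating the integral, leaving only this clean expression. Verifying these cancellations carefully is the main bookkeeping obstacle, and it is exactly where the special $1,4,1$ weighting of Simpson's rule does its work; for a general symmetric three-point rule the third derivative would not vanish at $t=0$.

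With $F'''$ in hand I would apply the mean value theorem (equivalently, write $f'''(m+t)-f'''(m-t)=\int_{m-t}^{m+t}f^{(4)}$) to obtain $f'''(m+t)-f'''(m-t)=2t\,f^{(4)}(\xi_t)$ for some $\xi_t\in(m-t,m+t)$, whence $\left\vert F'''(t)\right\vert\le\frac{2t^2}{3}\|f^{(4)}\|_\infty$. Since $F$ and its first three derivatives vanish at $t=0$, integrating this bound three times from $0$ to $t$ yields successively $\left\vert F''(t)\right\vert\le\frac{2}{9}\|f^{(4)}\|_\infty\,t^3$, then $\left\vert F'(t)\right\vert\le\frac{1}{18}\|f^{(4)}\|_\infty\,t^4$, and finally
\begin{equation*}
\left\vert F(t)\right\vert\le\frac{1}{90}\|f^{(4)}\|_\infty\,t^5.
\end{equation*}

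Finally I would set $t=h=\frac{b-a}{2}$. Substituting gives $\left\vert F(h)\right\vert\le\frac{\|f^{(4)}\|_\infty}{90}\cdot\frac{(b-a)^5}{32}=\frac{(b-a)^5}{2880}\|f^{(4)}\|_\infty$, and dividing through by $b-a$ produces the stated constant $\frac{1}{2880}$. I note that the natural power of $b-a$ emerging from this normalization is $(b-a)^4$ rather than the $(b-a)^2$ printed in the statement, which appears to be a typographical slip; the method is otherwise entirely determined by the vanishing-derivative cascade, so the only genuinely delicate step is the third-derivative cancellation, while the remaining estimates are routine integrations of the resulting polynomial bounds. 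An alternative route would express $F(h)$ directly via a Peano kernel and bound it by $\|f^{(4)}\|_\infty\int_a^b\left\vert K(t)\right\vert\,dt$, but that merely relocates the same computation into the evaluation of $\int\left\vert K\right\vert$.
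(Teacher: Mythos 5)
The first thing to say is that there is no proof in the paper to compare yours against: the statement is Theorem 3 of the introduction, which the author merely quotes as a well-known classical result (Simpson's inequality) before moving on to the fractional-integral results that are the paper's actual content. So the only question is whether your argument stands on its own, and it does. I checked the cancellation you call the crucial step: with $F(t)=\int_{m-t}^{m+t}f(x)\,dx-\frac{t}{3}\left[f(m-t)+4f(m)+f(m+t)\right]$ one gets $F'(t)=\frac{2}{3}\left[f(m+t)+f(m-t)\right]-\frac{4}{3}f(m)-\frac{t}{3}\left[f'(m+t)-f'(m-t)\right]$, then $F''(t)=\frac{1}{3}\left[f'(m+t)-f'(m-t)\right]-\frac{t}{3}\left[f''(m+t)+f''(m-t)\right]$, and indeed $F'''(t)=-\frac{t}{3}\left[f'''(m+t)-f'''(m-t)\right]$, with $F(0)=F'(0)=F''(0)=F'''(0)=0$; your side remark that this collapse forces the $1,4,1$ weights is also correct (a general symmetric rule leaves a term $(1-3w_1)\left[f''(m+t)+f''(m-t)\right]$). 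Writing $f'''(m+t)-f'''(m-t)=\int_{m-t}^{m+t}f^{(4)}(x)\,dx$ gives $\left\vert F'''(t)\right\vert\le\frac{2t^{2}}{3}\Vert f^{(4)}\Vert_{\infty}$, and the three integrations produce $\frac{2t^{3}}{9}$, $\frac{t^{4}}{18}$, $\frac{t^{5}}{90}$ exactly as you state, so $\left\vert F(h)\right\vert\le\frac{(b-a)^{5}}{2880}\Vert f^{(4)}\Vert_{\infty}$ at $h=\frac{b-a}{2}$; the constant is even sharp, since $f(x)=x^{4}$ gives equality. One small refinement you may add: since $f^{(4)}$ is only assumed to exist on the open interval $(a,b)$, run the estimate for $t<h$ and pass to the limit $t\to h^{-}$ using continuity of $f$ on $[a,b]$.

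You are also right to flag the exponent, and this is worth stating more forcefully: as printed, with $(b-a)^{2}$, the theorem is false, not merely misprinted in a harmless way. A scaling argument shows this: for $f_{L}(x)=f(x/L)$ on $[0,L]$ the left-hand side is invariant, while $\Vert f_{L}^{(4)}\Vert_{\infty}(b-a)^{2}=\Vert f^{(4)}\Vert_{\infty}L^{-2}\to 0$ as $L\to\infty$, which would force the left-hand side to vanish for every $f$. The correct classical inequality, which is what your argument proves, carries $(b-a)^{4}$ after the normalization by $\frac{1}{b-a}$.
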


In \cite{HM94}, Hudzik and Maligranda considered among others the class of
functions which are s-convex in the second sense.

\begin{definition}
A function $f:\left[ 0,\infty \right) \rightarrow 
%TCIMACRO{\U{211d} }%
%BeginExpansion
\mathbb{R}
%EndExpansion
$ is said to be $s-$convex in the second sense if%
\begin{equation*}
f(\alpha x+\beta y)\leq \alpha ^{s}f(x)+\beta ^{s}f(y)
\end{equation*}%
for all $x,y\in \lbrack 0,\infty )$, $\alpha ,\beta \geq 0$ with $\alpha
+\beta =1$ and for some fixed $s\in (0,1]$. This class of $s-$convex
functions in the second sense is usually denoted by $K_{s}^{2}$.
\end{definition}

It can be easily seen that for s = 1, s-convexity reduces to ordinary
convexity of functions defined on $[0,\infty )$.

In \cite{T85}, G. Toader considered the class of m-convexfunctions: another
intermediate between the usual convexity and starshaped convexity.

\begin{definition}
The function $f:[0,b]\rightarrow 
%TCIMACRO{\U{211d} }%
%BeginExpansion
\mathbb{R}
%EndExpansion
,b>0,$ is said to be $m-$convex, where $m\in \lbrack 0,1]$, if we have%
\begin{equation*}
f(tx+m(1-t)y)\leq tf(x)+m(1-t)f(y)
\end{equation*}%
for all $x,y\in \lbrack 0,b]$ and $t\in \lbrack 0,1]$. We say that $f$ is $%
m- $concave if $(-f)$ is $m-$convex.
\end{definition}

In \cite{PPT92}, Pecaric et al. defined quasi-convex functions as following

\begin{definition}
A function $f:[a,b]\mathbb{\rightarrow R}$ is said quasi-convex on $[a,b]$
if 
\begin{equation*}
f\left( \lambda x+(1-\lambda )y\right) \leq \sup \left\{ f(x),f(y)\right\} ,
\end{equation*}%
for any $x,y\in \lbrack a,b]$ and $\lambda \in \left[ 0,1\right] .$
\end{definition}

The notion of quasi-convex functions generalizes the notion of convex
functions. Clearly, any convex function is a quasi-convex function.
Furthermore, there exist quasi-convex functions which are not \ convex (see 
\cite{I07}).

We give some necessary definitions and mathematical preliminaries of
fractional calculus theory which are used throughout this paper.

\begin{definition}
Let $f\in L\left[ a,b\right] $. The Riemann-Liouville integrals $%
J_{a^{+}}^{\alpha }f$ and $J_{b^{-}}^{\alpha }f$ of oder $\alpha >0$ with $%
a\geq 0$ are defined by

\begin{equation*}
J_{a^{+}}^{\alpha }f(x)=\frac{1}{\Gamma (\alpha )}\dint\limits_{a}^{x}\left(
x-t\right) ^{\alpha -1}f(t)dt,\ x>a
\end{equation*}

and

\begin{equation*}
J_{b^{-}}^{\alpha }f(x)=\frac{1}{\Gamma (\alpha )}\dint\limits_{x}^{b}\left(
t-x\right) ^{\alpha -1}f(t)dt,\ x<b
\end{equation*}%
respectively, where $\Gamma (\alpha )$ is the Gamma function defined by $%
\Gamma (\alpha )=$ $\dint\limits_{0}^{\infty }e^{-t}t^{\alpha -1}dt$ and $%
J_{a^{+}}^{0}f(x)=J_{b^{-}}^{0}f(x)=f(x).$
\end{definition}

In the case of $\alpha =1$, the fractional integral reduces to the classical
integral. Properties concerning this operator can be found \cite%
{GM97,MR93,P99}.

For some recent result connected with fractional integral see \cite%
{D10,OAK12,S12,SSYB11}.

\ 

\section{Generalized integral inequalities for some convex functions via
fractional integrals}

Let $f:I\subseteq 
%TCIMACRO{\U{211d} }%
%BeginExpansion
\mathbb{R}
%EndExpansion
\rightarrow 
%TCIMACRO{\U{211d} }%
%BeginExpansion
\mathbb{R}
%EndExpansion
$ be a differentiable function on $I^{\circ }$, the interior of $I$,
throughout this section we will take%
\begin{eqnarray*}
S_{f}\left( x,\lambda ,\alpha ,a,b\right) &=&\left( 1-\lambda \right) \left[ 
\frac{\left( x-a\right) ^{\alpha }+\left( b-x\right) ^{\alpha }}{b-a}\right]
f(x)+\lambda \left[ \frac{\left( x-a\right) ^{\alpha }f(a)+\left( b-x\right)
^{\alpha }f(b)}{b-a}\right] \\
&&-\frac{\Gamma \left( \alpha +1\right) }{b-a}\left[ J_{x^{-}}^{\alpha
}f(a)+J_{x^{+}}^{\alpha }f(b)\right]
\end{eqnarray*}
where $a,b\in I$ with $a<b$, $\ x\in \lbrack a,b]$ , $\lambda \in \left[ 0,1%
\right] $, $\alpha >0$ and $\Gamma $ is Euler Gamma function. In order to
prove our main results we need the following identity.

\begin{lemma}
\label{2.1}Let $f:I\subseteq 
%TCIMACRO{\U{211d} }%
%BeginExpansion
\mathbb{R}
%EndExpansion
\rightarrow 
%TCIMACRO{\U{211d} }%
%BeginExpansion
\mathbb{R}
%EndExpansion
$ be a differentiable function on $I^{\circ }$ such that $f^{\prime }\in
L[a,b]$, where $a,b\in I$ with $a<b$. Then for all $x\in \lbrack a,b]$ , $%
\lambda \in \left[ 0,1\right] $ and $\alpha >0$ we have:%
\begin{eqnarray}
&&S_{f}\left( x,\lambda ,\alpha ,a,b\right) =\frac{\left( x-a\right)
^{\alpha +1}}{b-a}\dint\limits_{0}^{1}\left( t^{\alpha }-\lambda \right)
f^{\prime }\left( tx+\left( 1-t\right) a\right) dt  \label{2-1} \\
&&+\frac{\left( b-x\right) ^{\alpha +1}}{b-a}\dint\limits_{0}^{1}\left(
\lambda -t^{\alpha }\right) f^{\prime }\left( tx+\left( 1-t\right) b\right)
dt.  \notag
\end{eqnarray}
\end{lemma}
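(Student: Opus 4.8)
The plan is to start from the right-hand side of \eqref{2-1} and reduce it to $S_{f}\left( x,\lambda ,\alpha ,a,b\right) $ by evaluating the two integrals separately with integration by parts, and then converting the resulting ordinary integrals into Riemann--Liouville fractional integrals via a linear change of variables. Set
\[
I_{1}=\int_{0}^{1}\left( t^{\alpha }-\lambda \right) f^{\prime }\left(
tx+\left( 1-t\right) a\right) dt,\qquad I_{2}=\int_{0}^{1}\left( \lambda
-t^{\alpha }\right) f^{\prime }\left( tx+\left( 1-t\right) b\right) dt,
\]
so that the whole right-hand side is $\frac{(x-a)^{\alpha +1}}{b-a}I_{1}+\frac{(b-x)^{\alpha +1}}{b-a}I_{2}$.

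For $I_{1}$ I would take $u=t^{\alpha }-\lambda $ and $dv=f^{\prime }\left( tx+(1-t)a\right) dt$, noting that $\frac{d}{dt}f\left( tx+(1-t)a\right) =(x-a)f^{\prime }\left( tx+(1-t)a\right) $, so $v=\frac{1}{x-a}f\left( tx+(1-t)a\right) $. Integration by parts produces the boundary term $\frac{(1-\lambda )f(x)+\lambda f(a)}{x-a}$ together with $-\frac{\alpha }{x-a}\int_{0}^{1}t^{\alpha -1}f\left( tx+(1-t)a\right) dt$. The substitution $u=tx+(1-t)a$ rewrites this remaining integral as $\frac{1}{(x-a)^{\alpha }}\int_{a}^{x}(u-a)^{\alpha -1}f(u)\,du$, which by the definition of $J_{x^{-}}^{\alpha }f(a)$ equals $\frac{\Gamma (\alpha )}{(x-a)^{\alpha }}J_{x^{-}}^{\alpha }f(a)$. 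Multiplying through by $\frac{(x-a)^{\alpha +1}}{b-a}$ and using $\alpha \Gamma (\alpha )=\Gamma (\alpha +1)$ then yields the full $(x-a)$-part of $S_{f}$.

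The computation for $I_{2}$ runs along the same lines, now with $v=\frac{1}{x-b}f\left( tx+(1-t)b\right) $; the boundary term simplifies to $\frac{(1-\lambda )f(x)+\lambda f(b)}{b-x}$ and the substitution $u=tx+(1-t)b$ brings in $J_{x^{+}}^{\alpha }f(b)$. The step that demands the most care is the sign bookkeeping, since $x-b<0$: after the substitution one must track the orientation of the integral and repeatedly rewrite $x-b=-(b-x)$, so that the powers $(b-u)^{\alpha -1}$ and the factor $(b-x)^{\alpha }$ emerge with the correct signs and the integral matches $J_{x^{+}}^{\alpha }f(b)=\frac{1}{\Gamma (\alpha )}\int_{x}^{b}(b-t)^{\alpha -1}f(t)\,dt$. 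This is the only genuinely delicate point in the argument.

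Finally I would add $\frac{(x-a)^{\alpha +1}}{b-a}I_{1}$ and $\frac{(b-x)^{\alpha +1}}{b-a}I_{2}$. The two $f(x)$ boundary contributions combine into $(1-\lambda )\frac{(x-a)^{\alpha }+(b-x)^{\alpha }}{b-a}f(x)$, the $f(a)$ and $f(b)$ contributions assemble into the $\lambda $-bracket of $S_{f}$, and the two fractional pieces combine into $-\frac{\Gamma (\alpha +1)}{b-a}\left[ J_{x^{-}}^{\alpha }f(a)+J_{x^{+}}^{\alpha }f(b)\right] $, reproducing $S_{f}\left( x,\lambda ,\alpha ,a,b\right) $ exactly. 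I expect no conceptual obstacle beyond the sign tracking in $I_{2}$ and the consistent use of $\alpha \Gamma (\alpha )=\Gamma (\alpha +1)$.
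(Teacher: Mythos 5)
Your proposal is correct and follows essentially the same route as the paper's own proof: integration by parts on each of the two integrals (with the boundary terms producing the $(1-\lambda)f(x)$ and $\lambda f(a)$, $\lambda f(b)$ contributions), the linear substitution converting the remaining integrals into $J_{x^{-}}^{\alpha }f(a)$ and $J_{x^{+}}^{\alpha }f(b)$ via $\alpha \Gamma (\alpha )=\Gamma (\alpha +1)$, and finally multiplying by $\frac{(x-a)^{\alpha +1}}{b-a}$ and $\frac{(b-x)^{\alpha +1}}{b-a}$ and adding. The sign handling you flag for $I_{2}$ (working with $x-b=-(b-x)$) is exactly how the paper treats it, so there is no gap.
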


\begin{proof}
By integration by parts and changing the variable, we can state%
\begin{eqnarray}
&&\dint\limits_{0}^{1}\left( t^{\alpha }-\lambda \right) f^{\prime }\left(
tx+\left( 1-t\right) a\right) dt  \label{2-1a} \\
&=&\left. \left( t^{\alpha }-\lambda \right) \frac{f\left( tx+\left(
1-t\right) a\right) }{x-a}\right\vert _{0}^{1}-\dint\limits_{0}^{1}\alpha
t^{\alpha -1}\frac{f\left( tx+\left( 1-t\right) a\right) }{x-a}dt  \notag \\
&=&\left( 1-\lambda \right) \frac{f(x)}{x-a}+\lambda \frac{f(a)}{x-a}-\frac{%
\alpha }{x-a}\dint\limits_{a}^{x}\left( \frac{u-a}{x-a}\right) ^{\alpha -1}%
\frac{f(u)}{x-a}du  \notag \\
&=&\left( 1-\lambda \right) \frac{f(x)}{x-a}+\lambda \frac{f(a)}{x-a}-\frac{%
\Gamma \left( \alpha +1\right) }{\left( x-a\right) ^{\alpha +1}}%
J_{x^{-}}^{\alpha }f(a)  \notag
\end{eqnarray}%
and similarly we get%
\begin{eqnarray}
&&\dint\limits_{0}^{1}\left( \lambda -t^{\alpha }\right) f^{\prime }\left(
tx+\left( 1-t\right) b\right) dt  \label{2-1b} \\
&=&\left. \left( \lambda -t^{\alpha }\right) \frac{f\left( tx+\left(
1-t\right) b\right) }{x-b}\right\vert _{0}^{1}-\dint\limits_{0}^{1}\alpha
t^{\alpha -1}\frac{f\left( tx+\left( 1-t\right) b\right) }{x-b}dt  \notag \\
&=&\left( 1-\lambda \right) \frac{f(x)}{b-x}+\lambda \frac{f(b)}{b-x}-\frac{%
\alpha }{b-x}\dint\limits_{x}^{b}\left( \frac{b-u}{b-x}\right) ^{\alpha -1}%
\frac{f(u)}{b-x}du  \notag \\
&=&\left( 1-\lambda \right) \frac{f(x)}{b-x}+\lambda \frac{f(b)}{b-x}-\frac{%
\Gamma \left( \alpha +1\right) }{\left( b-x\right) ^{\alpha +1}}%
J_{x^{+}}^{\alpha }f(b)  \notag
\end{eqnarray}%
Multiplying both sides of (\ref{2-1a}) and (\ref{2-1b}) by $\frac{\left(
x-a\right) ^{\alpha +1}}{b-a}$ and $\frac{\left( b-x\right) ^{\alpha +1}}{b-a%
}$, respectively, and adding the resulting identities we obtain the desired
result.
\end{proof}

\subsection{For $s-$convex functions.}

\begin{theorem}
\label{2.1.1}Let $f:$ $I\subset \lbrack 0,\infty )\rightarrow 
%TCIMACRO{\U{211d} }%
%BeginExpansion
\mathbb{R}
%EndExpansion
$ be a differentiable function on $I^{\circ }$ such that $f^{\prime }\in
L[a,b]$, where $a,b\in I^{\circ }$ with $a<b$. If $|f^{\prime }|^{q}$ is $s-$%
convex on $[a,b]$ for some fixed $q\geq 1$, $x\in \lbrack a,b]$, $\lambda
\in \left[ 0,1\right] $ then the following inequality for fractional
integrals holds%
\begin{eqnarray}
&&\left\vert S_{f}\left( x,\lambda ,\alpha ,a,b\right) \right\vert   \notag
\\
&\leq &A_{1}^{1-\frac{1}{q}}\left( \alpha ,\lambda \right) \left\{ \frac{%
\left( x-a\right) ^{\alpha +1}}{b-a}\left( \left\vert f^{\prime }\left(
x\right) \right\vert ^{q}A_{2}\left( \alpha ,\lambda \right) +\left\vert
f^{\prime }\left( a\right) \right\vert ^{q}A_{3}\left( \alpha ,\lambda
\right) \right) ^{\frac{1}{q}}\right.   \label{2-2} \\
&&+\left. \frac{\left( b-x\right) ^{\alpha +1}}{b-a}\left( \left\vert
f^{\prime }\left( x\right) \right\vert ^{q}A_{2}\left( \alpha ,\lambda
\right) +\left\vert f^{\prime }\left( b\right) \right\vert ^{q}A_{3}\left(
\alpha ,\lambda \right) \right) ^{\frac{1}{q}}\right\}   \notag
\end{eqnarray}%
where 
\begin{eqnarray*}
A_{1}\left( \alpha ,\lambda \right)  &=&\frac{2\alpha \lambda ^{1+\frac{1}{%
\alpha }}+1}{\alpha +1}-\lambda , \\
A_{2}\left( \alpha ,\lambda ,s\right)  &=&\frac{2\alpha \lambda ^{1+\frac{s+1%
}{\alpha }}+s+1}{\left( s+1\right) \left( \alpha +s+1\right) }-\frac{\lambda 
}{s+1}, \\
A_{3}\left( \alpha ,\lambda ,s\right)  &=&\lambda \left[ \frac{1-2\left(
1-\lambda ^{\frac{1}{\alpha }}\right) ^{s+1}}{s+1}\right] +\beta \left(
\alpha +1,s+1\right) -2\beta \left( \lambda ^{\frac{1}{\alpha }};\alpha
+1,s+1\right) ,
\end{eqnarray*}%
and $\beta $ is Euler Beta function defined by%
\begin{equation*}
\beta \left( x,y\right) =\frac{\Gamma (x)\Gamma (y)}{\Gamma (x+y)}%
=\dint\limits_{0}^{1}t^{x-1}\left( 1-t\right) ^{y-1}dt,\ \ x,y>0.
\end{equation*}
\end{theorem}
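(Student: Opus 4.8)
The plan is to start from the identity in Lemma \ref{2.1}, take absolute values, and estimate the two integrals in (\ref{2-1}) separately. Applying the triangle inequality and noting that $\left\vert t^{\alpha}-\lambda\right\vert =\left\vert \lambda-t^{\alpha}\right\vert$, I first obtain
\[
\left\vert S_{f}\left( x,\lambda ,\alpha ,a,b\right) \right\vert \leq \frac{\left( x-a\right) ^{\alpha +1}}{b-a}\int_{0}^{1}\left\vert t^{\alpha }-\lambda \right\vert \left\vert f^{\prime }\left( tx+\left( 1-t\right) a\right) \right\vert dt+\frac{\left( b-x\right) ^{\alpha +1}}{b-a}\int_{0}^{1}\left\vert t^{\alpha }-\lambda \right\vert \left\vert f^{\prime }\left( tx+\left( 1-t\right) b\right) \right\vert dt,
\]
so that both integrals carry the same nonnegative weight $\left\vert t^{\alpha}-\lambda\right\vert$.

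The next step is to treat each integral with the power-mean (weighted H\"older) inequality. Writing $\left\vert t^{\alpha}-\lambda\right\vert \left\vert f^{\prime}\right\vert =\left\vert t^{\alpha}-\lambda\right\vert ^{1-\frac{1}{q}}\cdot\left( \left\vert t^{\alpha}-\lambda\right\vert ^{\frac{1}{q}}\left\vert f^{\prime}\right\vert \right)$ and applying H\"older with exponents $\frac{q}{q-1}$ and $q$ (legitimate since $q\geq 1$), each integral is dominated by $\left( \int_{0}^{1}\left\vert t^{\alpha}-\lambda\right\vert dt\right) ^{1-\frac{1}{q}}$ times $\left( \int_{0}^{1}\left\vert t^{\alpha}-\lambda\right\vert \left\vert f^{\prime}\right\vert ^{q}dt\right) ^{\frac{1}{q}}$. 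Identifying $A_{1}\left( \alpha,\lambda\right) =\int_{0}^{1}\left\vert t^{\alpha}-\lambda\right\vert dt$ produces the common prefactor $A_{1}^{1-\frac{1}{q}}\left( \alpha,\lambda\right)$.

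Into the remaining $q$-th power integrals I would insert the $s$-convexity of $\left\vert f^{\prime}\right\vert ^{q}$, namely $\left\vert f^{\prime}\left( tx+\left( 1-t\right) a\right) \right\vert ^{q}\leq t^{s}\left\vert f^{\prime}\left( x\right) \right\vert ^{q}+\left( 1-t\right) ^{s}\left\vert f^{\prime}\left( a\right) \right\vert ^{q}$ and the analogous bound at $b$, which follow directly from the defining inequality for $K_{s}^{2}$ applied to the convex combinations with weights $t$ and $1-t$. This reduces the whole estimate to the two moment integrals $A_{2}\left( \alpha,\lambda,s\right) =\int_{0}^{1}\left\vert t^{\alpha}-\lambda\right\vert t^{s}dt$ and $A_{3}\left( \alpha,\lambda,s\right) =\int_{0}^{1}\left\vert t^{\alpha}-\lambda\right\vert \left( 1-t\right) ^{s}dt$, and reassembling the two halves yields exactly (\ref{2-2}).

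The only genuine computation is the evaluation of $A_{1}$, $A_{2}$, $A_{3}$. Since $t^{\alpha}-\lambda$ changes sign at $t_{0}=\lambda^{\frac{1}{\alpha}}$, I split each as $\int_{0}^{t_{0}}\left( \lambda-t^{\alpha}\right) (\cdots )dt+\int_{t_{0}}^{1}\left( t^{\alpha}-\lambda\right) (\cdots )dt$. For $A_{1}$ and $A_{2}$ the integrands are pure powers of $t$, so the split integrals are elementary and collapse to the stated closed forms. The main obstacle is $A_{3}$, where the factor $\left( 1-t\right) ^{s}$ forces integrals of the type $\int_{0}^{t_{0}}t^{\alpha}\left( 1-t\right) ^{s}dt$ that are not elementary; the key is to recognize these as the incomplete Beta value $\beta\left( \lambda^{\frac{1}{\alpha}};\alpha+1,s+1\right)$ and to write $\int_{t_{0}}^{1}=\beta\left( \alpha+1,s+1\right) -\int_{0}^{t_{0}}$, which combines the two pieces into $\beta\left( \alpha+1,s+1\right) -2\beta\left( \lambda^{\frac{1}{\alpha}};\alpha+1,s+1\right)$, while the $\lambda\left( 1-t\right) ^{s}$ contributions integrate elementarily to $\lambda\left[ \frac{1-2\left( 1-\lambda^{\frac{1}{\alpha}}\right) ^{s+1}}{s+1}\right]$. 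Substituting these evaluations completes the proof.
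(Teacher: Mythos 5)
Your proposal is correct and follows essentially the same route as the paper's own proof: the triangle inequality applied to Lemma \ref{2.1}, the power-mean (weighted H\"older) step producing the factor $A_{1}^{1-\frac{1}{q}}\left( \alpha ,\lambda \right) $, the insertion of $s$-convexity of $\left\vert f^{\prime }\right\vert ^{q}$, and the evaluation of the moment integrals by splitting at $t_{0}=\lambda ^{\frac{1}{\alpha }}$, with the $\left( 1-t\right) ^{s}$ integral expressed via the complete and incomplete Beta functions exactly as in the paper. No gaps; the only degenerate point, $q=1$ (where the H\"older exponent $\frac{q}{q-1}$ is undefined), is harmless since the power-mean step is then an identity.
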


\begin{proof}
From Lemma \ref{2.1}, property of the modulus and using the power-mean
inequality we have%
\begin{eqnarray}
&&\left\vert S_{f}\left( x,\lambda ,\alpha ,a,b\right) \right\vert \leq 
\frac{\left( x-a\right) ^{\alpha +1}}{b-a}\dint\limits_{0}^{1}\left\vert
t^{\alpha }-\lambda \right\vert \left\vert f^{\prime }\left( tx+\left(
1-t\right) a\right) \right\vert dt  \notag \\
&&+\frac{\left( b-x\right) ^{\alpha +1}}{b-a}\dint\limits_{0}^{1}\left\vert
\lambda -t^{\alpha }\right\vert \left\vert f^{\prime }\left( tx+\left(
1-t\right) b\right) \right\vert dt  \notag \\
&\leq &\frac{\left( x-a\right) ^{\alpha +1}}{b-a}\left(
\dint\limits_{0}^{1}\left\vert t^{\alpha }-\lambda \right\vert dt\right) ^{1-%
\frac{1}{q}}\left( \dint\limits_{0}^{1}\left\vert t^{\alpha }-\lambda
\right\vert \left\vert f^{\prime }\left( tx+\left( 1-t\right) a\right)
\right\vert ^{q}dt\right) ^{\frac{1}{q}}  \notag \\
&&+\frac{\left( b-x\right) ^{\alpha +1}}{b-a}\left(
\dint\limits_{0}^{1}\left\vert t^{\alpha }-\lambda \right\vert dt\right) ^{1-%
\frac{1}{q}}\left( \dint\limits_{0}^{1}\left\vert t^{\alpha }-\lambda
\right\vert \left\vert f^{\prime }\left( tx+\left( 1-t\right) b\right)
\right\vert ^{q}dt\right) ^{\frac{1}{q}}  \label{2-2a}
\end{eqnarray}%
Since$\left\vert f^{\prime }\right\vert ^{q}$ is $s-$convex on $[a,b]$ we get%
\begin{eqnarray}
\dint\limits_{0}^{1}\left\vert t^{\alpha }-\lambda \right\vert \left\vert
f^{\prime }\left( tx+\left( 1-t\right) a\right) \right\vert ^{q}dt &\leq
&\dint\limits_{0}^{1}\left\vert t^{\alpha }-\lambda \right\vert \left(
t^{s}\left\vert f^{\prime }\left( x\right) \right\vert ^{q}+\left(
1-t\right) ^{s}\left\vert f^{\prime }\left( a\right) \right\vert ^{q}\right)
dt  \notag \\
&=&\left\vert f^{\prime }\left( x\right) \right\vert ^{q}A_{2}\left( \alpha
,\lambda ,s\right) +\left\vert f^{\prime }\left( a\right) \right\vert
^{q}A_{3}\left( \alpha ,\lambda ,s\right) ,  \label{2-2b}
\end{eqnarray}%
\begin{eqnarray}
\dint\limits_{0}^{1}\left\vert t^{\alpha }-\lambda \right\vert \left\vert
f^{\prime }\left( tx+\left( 1-t\right) b\right) \right\vert ^{q}dt &\leq
&\dint\limits_{0}^{1}\left\vert t^{\alpha }-\lambda \right\vert \left(
t^{s}\left\vert f^{\prime }\left( x\right) \right\vert ^{q}+\left(
1-t\right) ^{s}\left\vert f^{\prime }\left( b\right) \right\vert ^{q}\right)
dt  \notag \\
&=&\left\vert f^{\prime }\left( x\right) \right\vert ^{q}A_{2}\left( \alpha
,\lambda ,s\right) +\left\vert f^{\prime }\left( b\right) \right\vert
^{q}A_{3}\left( \alpha ,\lambda ,s\right) ,  \label{2-2c}
\end{eqnarray}%
where we use the fact that%
\begin{eqnarray*}
\dint\limits_{0}^{1}\left\vert t^{\alpha }-\lambda \right\vert \left(
1-t\right) ^{s}dt &=&\dint\limits_{0}^{\lambda ^{\frac{1}{\alpha }}}\left(
\lambda -t^{\alpha }\right) \left( 1-t\right) ^{s}dt+\dint\limits_{\lambda ^{%
\frac{1}{\alpha }}}^{1}\left( t^{\alpha }-\lambda \right) \left( 1-t\right)
^{s}dt \\
&=&\lambda \dint\limits_{0}^{\lambda ^{\frac{1}{\alpha }}}\left( 1-t\right)
^{s}dt-\dint\limits_{0}^{\lambda ^{\frac{1}{\alpha }}}t^{\alpha }\left(
1-t\right) ^{s}dt+\dint\limits_{\lambda ^{\frac{1}{\alpha }}}^{1}t^{\alpha
}\left( 1-t\right) ^{s}dt \\
&&-\lambda \dint\limits_{\lambda ^{\frac{1}{\alpha }}}^{1}\left( 1-t\right)
^{s}dt \\
&=&\lambda \left[ \frac{1-2\left( 1-\lambda ^{\frac{1}{\alpha }}\right)
^{s+1}}{s+1}\right] +\dint\limits_{0}^{1}t^{\alpha }\left( 1-t\right)
^{s}dt-2\dint\limits_{0}^{\lambda ^{\frac{1}{\alpha }}}t^{\alpha }\left(
1-t\right) ^{s}dt \\
&=&\lambda \left[ \frac{1-2\left( 1-\lambda ^{\frac{1}{\alpha }}\right)
^{s+1}}{s+1}\right] +\beta \left( \alpha +1,s+1\right) -2\beta \left(
\lambda ^{\frac{1}{\alpha }};\alpha +1,s+1\right) ,
\end{eqnarray*}%
\begin{equation*}
\dint\limits_{0}^{1}\left\vert t^{\alpha }-\lambda \right\vert t^{s}dt=\frac{%
2\alpha \lambda ^{1+\frac{s+1}{\alpha }}+s+1}{\left( s+1\right) \left(
\alpha +s+1\right) }-\frac{\lambda }{s+1},
\end{equation*}%
and by simple computation%
\begin{eqnarray}
\dint\limits_{0}^{1}\left\vert t^{\alpha }-\lambda \right\vert dt
&=&\dint\limits_{0}^{\lambda ^{\frac{1}{\alpha }}}\left( \lambda -t^{\alpha
}\right) dt+\dint\limits_{\lambda ^{\frac{1}{\alpha }}}^{1}\left( t^{\alpha
}-\lambda \right) dt  \notag \\
&=&\frac{2\alpha \lambda ^{1+\frac{1}{\alpha }}+1}{\alpha +1}-\lambda .
\label{2-2d}
\end{eqnarray}%
Hence, If we use (\ref{2-2b}), (\ref{2-2c}) and (\ref{2-2d}) in (\ref{2-2a}%
), we obtain the desired result. This completes the proof.
\end{proof}

\begin{corollary}
Under the assumptions of Theorem \ref{2.1.1} with $q=1,$ the inequality (\ref%
{2-2}) reduced to the following inequality%
\begin{eqnarray*}
\left\vert S_{f}\left( x,\lambda ,\alpha ,a,b\right) \right\vert  &\leq
&\left\{ \frac{\left( x-a\right) ^{\alpha +1}}{b-a}\left( \left\vert
f^{\prime }\left( x\right) \right\vert A_{2}\left( \alpha ,\lambda ,s\right)
+\left\vert f^{\prime }\left( a\right) \right\vert A_{3}\left( \alpha
,\lambda ,s\right) \right) \right.  \\
&&+\left. \frac{\left( b-x\right) ^{\alpha +1}}{b-a}\left( \left\vert
f^{\prime }\left( x\right) \right\vert A_{2}\left( \alpha ,\lambda ,s\right)
+\left\vert f^{\prime }\left( b\right) \right\vert A_{3}\left( \alpha
,\lambda ,s\right) \right) \right\} .
\end{eqnarray*}
\end{corollary}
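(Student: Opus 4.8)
The plan is to read the claim off as the $q=1$ specialization of Theorem \ref{2.1.1}, since nothing new needs to be proved beyond substitution. Two simplifications occur when one sets $q=1$ in the right-hand side of (\ref{2-2}). First, the leading factor becomes $A_{1}^{1-\frac1q}(\alpha,\lambda)=A_{1}^{0}(\alpha,\lambda)$, and because $A_{1}(\alpha,\lambda)=\int_{0}^{1}\left\vert t^{\alpha}-\lambda\right\vert dt$ is strictly positive (the integrand cannot vanish almost everywhere, as $t^{\alpha}$ is non-constant), this factor equals $1$. Second, each outer exponent $\frac1q$ on the two bracketed sums becomes $1$, so those sums enter linearly rather than through a $q$-th root. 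Feeding these two observations into (\ref{2-2}), with $A_{2}$ and $A_{3}$ evaluated at $(\alpha,\lambda,s)$ as in the theorem, gives precisely the asserted inequality.

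Equivalently, and perhaps more transparently, I would simply rerun the proof of Theorem \ref{2.1.1} with $q=1$ from the outset, in which case the power-mean inequality step is not needed at all. Starting from the identity of Lemma \ref{2.1} and applying the triangle inequality yields
\begin{equation*}
\left\vert S_{f}(x,\lambda,\alpha,a,b)\right\vert \le \frac{(x-a)^{\alpha+1}}{b-a}\int_{0}^{1}\left\vert t^{\alpha}-\lambda\right\vert \left\vert f^{\prime}(tx+(1-t)a)\right\vert dt + \frac{(b-x)^{\alpha+1}}{b-a}\int_{0}^{1}\left\vert t^{\alpha}-\lambda\right\vert \left\vert f^{\prime}(tx+(1-t)b)\right\vert dt .
\end{equation*}
Since $\left\vert f^{\prime}\right\vert$ (the case $q=1$ of $\left\vert f^{\prime}\right\vert^{q}$) is $s$-convex, I would bound $\left\vert f^{\prime}(tx+(1-t)a)\right\vert \le t^{s}\left\vert f^{\prime}(x)\right\vert + (1-t)^{s}\left\vert f^{\prime}(a)\right\vert$ and likewise for the second integrand, then integrate term by term using the two auxiliary evaluations $\int_{0}^{1}\left\vert t^{\alpha}-\lambda\right\vert t^{s}dt = A_{2}(\alpha,\lambda,s)$ and $\int_{0}^{1}\left\vert t^{\alpha}-\lambda\right\vert (1-t)^{s}dt = A_{3}(\alpha,\lambda,s)$ already carried out inside the proof of Theorem \ref{2.1.1}. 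This reproduces the stated bound directly.

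Because the whole argument reduces to substitution together with the elementary integral evaluations borrowed verbatim from the preceding proof, there is no genuine obstacle here. The only point worth flagging is the interpretation of the factor $A_{1}^{1-\frac1q}$ at $q=1$, which is settled once one observes that $A_{1}(\alpha,\lambda)>0$ so that $A_{1}^{0}=1$; I would therefore favor the direct $q=1$ derivation sketched above, as it sidesteps that normalization question entirely.
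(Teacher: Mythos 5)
Your proposal is correct and matches the paper's (implicit) argument: the corollary is obtained exactly by substituting $q=1$ into (\ref{2-2}), whereupon $A_{1}^{1-\frac{1}{q}}(\alpha,\lambda)=A_{1}^{0}(\alpha,\lambda)=1$ and the outer exponents $\frac{1}{q}$ become $1$. Your observation that $A_{1}(\alpha,\lambda)=\int_{0}^{1}\left\vert t^{\alpha}-\lambda\right\vert dt>0$ (so the zeroth power is unambiguous), and your alternative direct $q=1$ derivation bypassing the power-mean step, are both sound but add nothing beyond the substitution the paper intends.
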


\begin{corollary}
Under the assumptions of Theorem \ref{2.1.1} with $x=\frac{a+b}{2},\ \lambda
=\frac{1}{3},$from the inequality (\ref{2-2}) we get the following Simpson
type inequality for fractional integrals%
\begin{eqnarray*}
&&\left\vert \frac{2^{\alpha -1}}{\left( b-a\right) ^{\alpha -1}}S_{f}\left( 
\frac{a+b}{2},\frac{1}{3},\alpha ,a,b\right) \right\vert  \\
&=&\left\vert \frac{1}{6}\left[ f(a)+4f\left( \frac{a+b}{2}\right) +f(b)%
\right] -\frac{\Gamma \left( \alpha +1\right) 2^{\alpha -1}}{\left(
b-a\right) ^{\alpha }}\left[ J_{\left( \frac{a+b}{2}\right) ^{-}}^{\alpha
}f(a)+J_{\left( \frac{a+b}{2}\right) ^{+}}^{\alpha }f(b)\right] \right\vert 
\\
&\leq &\frac{b-a}{4}A_{1}^{1-\frac{1}{q}}\left( \alpha ,\frac{1}{3}\right)
\left\{ \left( \left\vert f^{\prime }\left( \frac{a+b}{2}\right) \right\vert
^{q}A_{2}\left( \alpha ,\frac{1}{3},s\right) +\left\vert f^{\prime }\left(
a\right) \right\vert ^{q}A_{3}\left( \alpha ,\frac{1}{3},s\right) \right) ^{%
\frac{1}{q}}\right.  \\
&&\left. +\left( \left\vert f^{\prime }\left( \frac{a+b}{2}\right)
\right\vert ^{q}A_{2}\left( \alpha ,\frac{1}{3},s\right) +\left\vert
f^{\prime }\left( b\right) \right\vert ^{q}A_{3}\left( \alpha ,\frac{1}{3}%
,s\right) \right) ^{\frac{1}{q}}\right\} .
\end{eqnarray*}
\end{corollary}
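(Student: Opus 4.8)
The plan is to obtain this corollary purely by specializing Theorem~\ref{2.1.1} to the midpoint $x=\frac{a+b}{2}$ and the parameter $\lambda=\frac{1}{3}$, and then rescaling both sides by the factor $\frac{2^{\alpha-1}}{(b-a)^{\alpha-1}}$. No new estimate is required: the entire content lies in simplifying the geometric prefactors, using that the midpoint forces $x-a=b-x=\frac{b-a}{2}$, hence $(x-a)^{\alpha}=(b-x)^{\alpha}=\left(\frac{b-a}{2}\right)^{\alpha}$.

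First I would verify the displayed \emph{equality}, which is an identity built into the definition of $S_{f}$ and uses no convexity. Substituting $x=\frac{a+b}{2}$ and $1-\lambda=\frac{2}{3}$, the first two terms of $S_{f}$ become
\begin{equation*}
\frac{2}{3}\cdot\frac{(b-a)^{\alpha-1}}{2^{\alpha-1}}\,f\!\left(\tfrac{a+b}{2}\right)+\frac{1}{3}\cdot\frac{(b-a)^{\alpha-1}}{2^{\alpha}}\bigl(f(a)+f(b)\bigr).
\end{equation*}
Multiplying through by $\frac{2^{\alpha-1}}{(b-a)^{\alpha-1}}$ collapses every power of $(b-a)$ and turns these two terms into $\frac{2}{3}f\!\left(\frac{a+b}{2}\right)+\frac{1}{6}\bigl(f(a)+f(b)\bigr)=\frac{1}{6}\left[f(a)+4f\!\left(\frac{a+b}{2}\right)+f(b)\right]$, the Simpson average. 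The same factor applied to the fractional-integral term of $S_{f}$ produces exactly $\frac{\Gamma(\alpha+1)2^{\alpha-1}}{(b-a)^{\alpha}}\left[J_{(\frac{a+b}{2})^{-}}^{\alpha}f(a)+J_{(\frac{a+b}{2})^{+}}^{\alpha}f(b)\right]$, giving the middle expression.

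Next I would insert the same two substitutions into the right-hand side of inequality~(\ref{2-2}). At the midpoint both geometric prefactors coincide, $\frac{(x-a)^{\alpha+1}}{b-a}=\frac{(b-x)^{\alpha+1}}{b-a}=\frac{(b-a)^{\alpha}}{2^{\alpha+1}}$, and applying $\frac{2^{\alpha-1}}{(b-a)^{\alpha-1}}$ yields $\frac{2^{\alpha-1}}{(b-a)^{\alpha-1}}\cdot\frac{(b-a)^{\alpha}}{2^{\alpha+1}}=\frac{b-a}{4}$, precisely the constant standing in front of the braces. Since $\lambda=\frac{1}{3}$ enters only through $A_{1},A_{2},A_{3}$, these are simply evaluated at $\lambda=\frac{1}{3}$ and carried along, while $f'(x)$ becomes $f'\!\left(\frac{a+b}{2}\right)$ in both $A_{2}$-weighted slots.

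The only real care is the bookkeeping with powers of two, matching $\frac{2^{\alpha-1}}{2^{\alpha+1}}=\frac{1}{4}$ and $\frac{2^{\alpha-1}}{2^{\alpha}}=\frac{1}{2}$, and confirming that the rescaling factor is pulled inside the absolute value consistently on both sides. There is no genuine analytic obstacle here: the convexity hypothesis and the power-mean step were already spent in proving Theorem~\ref{2.1.1}, so this corollary is a substitution-and-simplify argument.
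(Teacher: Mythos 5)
Your proposal is correct and is exactly the argument the paper intends: the corollary is stated without proof precisely because it follows from inequality (\ref{2-2}) by the substitution $x=\frac{a+b}{2}$, $\lambda=\frac{1}{3}$ and multiplication by $\frac{2^{\alpha-1}}{(b-a)^{\alpha-1}}$, and your bookkeeping of the powers of two (yielding the Simpson average $\frac{1}{6}\left[f(a)+4f\left(\frac{a+b}{2}\right)+f(b)\right]$ on the left and the constant $\frac{b-a}{4}$ on the right) checks out.
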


\begin{corollary}
Under the assumptions of Theorem \ref{2.1.1} with $x=\frac{a+b}{2},\ \lambda
=0,$from the inequality (\ref{2-2}) we get the following midpoint type
inequality for fractional integrals%
\begin{eqnarray*}
&&\left\vert \frac{2^{\alpha -1}}{\left( b-a\right) ^{\alpha -1}}S_{f}\left( 
\frac{a+b}{2},0,\alpha ,a,b\right) \right\vert  \\
&=&\left\vert f\left( \frac{a+b}{2}\right) -\frac{\Gamma \left( \alpha
+1\right) 2^{\alpha -1}}{\left( b-a\right) ^{\alpha }}\left[ J_{\left( \frac{%
a+b}{2}\right) ^{-}}^{\alpha }f(a)+J_{\left( \frac{a+b}{2}\right)
^{+}}^{\alpha }f(b)\right] \right\vert  \\
&\leq &\frac{b-a}{4}\left( \frac{1}{\alpha +1}\right) ^{1-\frac{1}{q}%
}\left\{ \left[ \frac{\left\vert f^{\prime }\left( \frac{a+b}{2}\right)
\right\vert ^{q}}{\alpha +s+1}+\left\vert f^{\prime }\left( a\right)
\right\vert ^{q}\beta \left( \alpha +1,s+1\right) \right] ^{\frac{1}{q}%
}\right.  \\
&&\left. +\left[ \frac{\left\vert f^{\prime }\left( \frac{a+b}{2}\right)
\right\vert ^{q}}{\alpha +s+1}+\left\vert f^{\prime }\left( b\right)
\right\vert ^{q}\beta \left( \alpha +1,s+1\right) \right] ^{\frac{1}{q}%
}\right\} .
\end{eqnarray*}
\end{corollary}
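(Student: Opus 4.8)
The plan is to treat this as a direct specialization of Theorem \ref{2.1.1}: substitute $\lambda=0$ and $x=\frac{a+b}{2}$ into the inequality (\ref{2-2}), evaluate the three constants $A_1,A_2,A_3$ at these values, and then renormalize both sides by the factor $\frac{2^{\alpha-1}}{(b-a)^{\alpha-1}}$. The entire argument is computational; no new estimate is required beyond what Theorem \ref{2.1.1} already supplies.

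First I would compute the constants at $\lambda=0$. Setting $\lambda=0$ in $A_1(\alpha,\lambda)$ gives $A_1(\alpha,0)=\frac{1}{\alpha+1}$, which explains the factor $\left(\frac{1}{\alpha+1}\right)^{1-\frac1q}$ on the right-hand side. Similarly $A_2(\alpha,0,s)=\frac{s+1}{(s+1)(\alpha+s+1)}=\frac{1}{\alpha+s+1}$. For $A_3$ the key observation is that the incomplete Beta function degenerates: since $\lambda^{1/\alpha}=0$, we have $\beta(0;\alpha+1,s+1)=\int_0^0 t^{\alpha}(1-t)^s\,dt=0$, and the bracketed $\lambda$-term vanishes as well, leaving $A_3(\alpha,0,s)=\beta(\alpha+1,s+1)$. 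These three evaluations reproduce exactly the coefficients $\frac{1}{\alpha+s+1}$ and $\beta(\alpha+1,s+1)$ in the stated bound.

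Next I would establish the displayed identity on the left. Inserting $\lambda=0$ and $x=\frac{a+b}{2}$ (so that $x-a=b-x=\frac{b-a}{2}$) into the definition of $S_f$ kills the $\lambda$-weighted endpoint term and collapses the leading coefficient to $\frac{2(\frac{b-a}{2})^\alpha}{b-a}=\frac{(b-a)^{\alpha-1}}{2^{\alpha-1}}$; multiplying through by $\frac{2^{\alpha-1}}{(b-a)^{\alpha-1}}$ then produces precisely $f\!\left(\frac{a+b}{2}\right)-\frac{\Gamma(\alpha+1)2^{\alpha-1}}{(b-a)^\alpha}\left[J^\alpha_{(\frac{a+b}{2})^-}f(a)+J^\alpha_{(\frac{a+b}{2})^+}f(b)\right]$. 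For the right-hand side, the symmetric choice of $x$ makes both prefactors equal, $\frac{(x-a)^{\alpha+1}}{b-a}=\frac{(b-x)^{\alpha+1}}{b-a}=\frac{(b-a)^\alpha}{2^{\alpha+1}}$, and multiplying the bound (\ref{2-2}) by $\frac{2^{\alpha-1}}{(b-a)^{\alpha-1}}$ reduces this common prefactor to $\frac{b-a}{4}$, which combines with $A_1^{1-1/q}=\left(\frac{1}{\alpha+1}\right)^{1-1/q}$ to match the announced constant.

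The argument contains no genuine obstacle; the only point demanding care is the bookkeeping of the powers of $2$ and $(b-a)$ when the normalization $\frac{2^{\alpha-1}}{(b-a)^{\alpha-1}}$ is distributed across both sides, together with the (easily overlooked) fact that the incomplete Beta term in $A_3$ vanishes at $\lambda=0$. Once these are handled, substituting the simplified $A_1,A_2,A_3$ and the common prefactor into (\ref{2-2}) yields the claimed midpoint inequality directly.
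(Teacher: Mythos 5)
Your proposal is correct and is exactly the argument the paper intends: the corollary is a direct specialization of Theorem \ref{2.1.1}, obtained by substituting $x=\frac{a+b}{2}$, $\lambda=0$ into (\ref{2-2}), noting $A_{1}(\alpha,0)=\frac{1}{\alpha+1}$, $A_{2}(\alpha,0,s)=\frac{1}{\alpha+s+1}$, $A_{3}(\alpha,0,s)=\beta(\alpha+1,s+1)$, and multiplying through by $\frac{2^{\alpha-1}}{(b-a)^{\alpha-1}}$. Your bookkeeping of the powers of $2$ and $(b-a)$, and the vanishing of the incomplete Beta term, all check out.
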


\begin{corollary}
\label{2.1.1a}Under the assumptions of Theorem \ref{2.1.1} with$\ \lambda =1,
$from the inequality (\ref{2-2}) we get the following trapezoid type
inequality for fractional integrals%
\begin{eqnarray*}
&&\left\vert S_{f}\left( \frac{a+b}{2},1,\alpha ,a,b\right) \right\vert  \\
&=&\left\vert \frac{\left( x-a\right) ^{\alpha }f(a)+\left( b-x\right)
^{\alpha }f(b)}{b-a}-\frac{\Gamma \left( \alpha +1\right) }{b-a}\left[
J_{x^{-}}^{\alpha }f(a)+J_{x^{+}}^{\alpha }f(b)\right] \right\vert  \\
&\leq &\left( \frac{\alpha }{\alpha +1}\right) ^{1-\frac{1}{q}}\left\{ \frac{%
\left( x-a\right) ^{\alpha +1}}{b-a}\left[ \frac{\alpha \left\vert f^{\prime
}\left( x\right) \right\vert ^{q}}{\left( s+1\right) \left( \alpha
+s+1\right) }+\left\vert f^{\prime }\left( a\right) \right\vert ^{q}\left( 
\frac{1}{s+1}-\beta \left( \alpha +1,s+1\right) \right) \right] ^{\frac{1}{q}%
}\right.  \\
&&\left. +\frac{\left( b-x\right) ^{\alpha +1}}{b-a}\left[ \frac{\alpha
\left\vert f^{\prime }\left( x\right) \right\vert ^{q}}{\left( s+1\right)
\left( \alpha +s+1\right) }+\left\vert f^{\prime }\left( b\right)
\right\vert ^{q}\left( \frac{1}{s+1}-\beta \left( \alpha +1,s+1\right)
\right) \right] ^{\frac{1}{q}}\right\} 
\end{eqnarray*}%
which is the same of the inequality in \cite[Theorem 9]{OAK12}.
\end{corollary}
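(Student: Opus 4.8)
The plan is to treat this corollary as a pure specialization of Theorem~\ref{2.1.1} at $\lambda=1$, so the whole argument is a substitution-and-simplification exercise requiring no new analytic input. First I would substitute $\lambda=1$ directly into the definition of $S_{f}(x,\lambda,\alpha,a,b)$. The coefficient $(1-\lambda)$ multiplying the $f(x)$-term vanishes and the coefficient $\lambda$ of the remaining term becomes $1$, so $S_{f}(x,1,\alpha,a,b)$ collapses to exactly the trapezoid-type expression
\[\frac{(x-a)^{\alpha}f(a)+(b-x)^{\alpha}f(b)}{b-a}-\frac{\Gamma(\alpha+1)}{b-a}\left[J_{x^{-}}^{\alpha}f(a)+J_{x^{+}}^{\alpha}f(b)\right],\]
which establishes the equality asserted in the statement (read with general $x$, as the right-hand side demands).

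Next I would evaluate the three constants at $\lambda=1$. For $A_{1}$ and $A_{2}$ this is routine algebra: since $\lambda=1$ forces $\lambda^{1+1/\alpha}=1$ and $\lambda^{1+(s+1)/\alpha}=1$, cancellation gives $A_{1}(\alpha,1)=\frac{\alpha}{\alpha+1}$ and $A_{2}(\alpha,1,s)=\frac{\alpha}{(s+1)(\alpha+s+1)}$, which match the prefactor $\left(\frac{\alpha}{\alpha+1}\right)^{1-1/q}$ and the coefficient of $|f'(x)|^{q}$ in the target inequality.

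The one step that needs a little care is the reduction of $A_{3}$, and this is where I expect the only genuine (though still modest) obstacle to lie. Here I would invoke two facts. First, since $\lambda^{1/\alpha}=1$ when $\lambda=1$, the bracketed quantity $\frac{1-2(1-\lambda^{1/\alpha})^{s+1}}{s+1}$ reduces to $\frac{1}{s+1}$ because $(1-1)^{s+1}=0$. Second, the incomplete Beta function evaluated at upper limit $1$, namely $\beta(1;\alpha+1,s+1)=\int_{0}^{1}t^{\alpha}(1-t)^{s}\,dt$, coincides with the complete Beta function $\beta(\alpha+1,s+1)$. Combining these, $A_{3}(\alpha,1,s)=\frac{1}{s+1}+\beta(\alpha+1,s+1)-2\beta(\alpha+1,s+1)=\frac{1}{s+1}-\beta(\alpha+1,s+1)$, which is precisely the coefficient of $|f'(a)|^{q}$ and $|f'(b)|^{q}$ in the corollary.

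Finally I would substitute these three evaluated constants into inequality~(\ref{2-2}) and read off the displayed bound verbatim, then observe that the resulting estimate is identical to \cite[Theorem~9]{OAK12}. The main difficulty, such as it is, is purely bookkeeping: keeping the incomplete-versus-complete Beta distinction straight in $A_{3}$ and confirming that the two vanishing simplifications, $(1-\lambda^{1/\alpha})^{s+1}\to0$ and the passage of the incomplete Beta to the complete Beta, are the only nontrivial cancellations, while everything else is mechanical.
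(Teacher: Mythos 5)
Your proposal is correct and coincides with the paper's intended argument: the corollary is stated without separate proof precisely because it is the direct specialization of inequality (\ref{2-2}) at $\lambda=1$, and your evaluations $A_{1}(\alpha,1)=\tfrac{\alpha}{\alpha+1}$, $A_{2}(\alpha,1,s)=\tfrac{\alpha}{(s+1)(\alpha+s+1)}$, $A_{3}(\alpha,1,s)=\tfrac{1}{s+1}-\beta(\alpha+1,s+1)$ (via $(1-\lambda^{1/\alpha})^{s+1}\to 0$ and $\beta(1;\alpha+1,s+1)=\beta(\alpha+1,s+1)$) are exactly the required simplifications. You also correctly handled the paper's typographical slip of writing $S_{f}\left(\tfrac{a+b}{2},1,\alpha,a,b\right)$ where the general-$x$ expression $S_{f}(x,1,\alpha,a,b)$ is meant.
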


\begin{remark}
In Corollary \ref{2.1.1a}, if we choose $\alpha =1,$ we get the same
inequality in \cite[Theorem 7]{AKO11}.
\end{remark}

\begin{corollary}
\label{2.1.1b}Let the assumptions of Theorem \ref{2.1.1} hold. If $\
\left\vert f^{\prime }(x)\right\vert \leq M$ for all $x\in \left[ a,b\right] 
$ and $\lambda =0,$ then from the inequality (\ref{2-2}) we get the
following Ostrowski type inequality for fractional integrals%
\begin{eqnarray*}
&&\left\vert \left[ \frac{\left( x-a\right) ^{\alpha }+\left( b-x\right)
^{\alpha }}{b-a}\right] f(x)-\frac{\Gamma \left( \alpha +1\right) }{b-a}%
\left[ J_{x^{-}}^{\alpha }f(a)+J_{x^{+}}^{\alpha }f(b)\right] \right\vert  \\
&\leq &M\left( \frac{1}{\alpha +1}\right) ^{1-\frac{1}{q}}\left[ \frac{1}{%
\alpha +s+1}+\beta \left( \alpha +1,s+1\right) \right] ^{\frac{1}{q}}\left[ 
\frac{\left( x-a\right) ^{\alpha +1}+\left( b-x\right) ^{\alpha +1}}{b-a}%
\right] 
\end{eqnarray*}%
for each $x\in \left[ a,b\right] .$
\end{corollary}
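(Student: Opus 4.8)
The plan is to obtain this as a direct specialization of Theorem \ref{2.1.1}: I would simply put $\lambda = 0$ in the inequality (\ref{2-2}) and then invoke the uniform bound $\left\vert f^{\prime}\right\vert \leq M$. No fresh integral identity or estimate is required here; everything reduces to evaluating the auxiliary constants $A_1, A_2, A_3$ at $\lambda = 0$ and simplifying $S_f\left(x,0,\alpha,a,b\right)$.

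First I would record what $S_f$ becomes at $\lambda = 0$. Because the coefficient of the $f(a),f(b)$ bracket in the definition of $S_f$ carries a factor $\lambda$, that whole term drops out, leaving
$$S_f\left(x,0,\alpha,a,b\right) = \left[\frac{\left(x-a\right)^{\alpha} + \left(b-x\right)^{\alpha}}{b-a}\right] f(x) - \frac{\Gamma\left(\alpha+1\right)}{b-a}\left[J_{x^{-}}^{\alpha}f(a) + J_{x^{+}}^{\alpha}f(b)\right],$$
which is exactly the expression inside the absolute value on the left-hand side of the claimed inequality.

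Next I would evaluate the three constants at $\lambda = 0$. In $A_1$ both the $\lambda^{1+\frac{1}{\alpha}}$ term and the $-\lambda$ term vanish, giving $A_1\left(\alpha,0\right) = \frac{1}{\alpha+1}$. In $A_2$ the $\lambda^{1+\frac{s+1}{\alpha}}$ piece and the $-\frac{\lambda}{s+1}$ piece disappear, and the surviving ratio collapses via $\frac{s+1}{\left(s+1\right)\left(\alpha+s+1\right)}$ to $A_2\left(\alpha,0,s\right) = \frac{1}{\alpha+s+1}$. The one spot that needs a moment of care is $A_3$: its bracketed first summand is multiplied by $\lambda$ and so disappears, while the incomplete Beta term $\beta\left(\lambda^{\frac{1}{\alpha}};\alpha+1,s+1\right)$ is taken at the upper limit $0$ and is therefore $0$; what remains is $A_3\left(\alpha,0,s\right) = \beta\left(\alpha+1,s+1\right)$.

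Finally I would substitute these values into (\ref{2-2}) and bound each of $\left\vert f^{\prime}(x)\right\vert, \left\vert f^{\prime}(a)\right\vert, \left\vert f^{\prime}(b)\right\vert$ by $M$. Each inner factor then becomes at most $M^{q}\left[\frac{1}{\alpha+s+1} + \beta\left(\alpha+1,s+1\right)\right]$, so its $q$-th root is at most $M\left[\frac{1}{\alpha+s+1} + \beta\left(\alpha+1,s+1\right)\right]^{\frac{1}{q}}$; pulling this common bound out of both summands leaves the factor $\frac{\left(x-a\right)^{\alpha+1} + \left(b-x\right)^{\alpha+1}}{b-a}$, which yields precisely the stated estimate. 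The only real obstacle is the bookkeeping of the $\lambda = 0$ limit of $A_3$ through the vanishing of the incomplete Beta integral at $0$; once that is settled, the remainder is mechanical substitution and a uniform bound.
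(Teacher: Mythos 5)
Your proposal is correct and is exactly how the paper obtains this corollary: a direct specialization of Theorem \ref{2.1.1} at $\lambda=0$ (where $A_{1}=\frac{1}{\alpha+1}$, $A_{2}=\frac{1}{\alpha+s+1}$, $A_{3}=\beta\left(\alpha+1,s+1\right)$ since the incomplete Beta term vanishes), followed by bounding $\left\vert f^{\prime}\right\vert$ by $M$ and factoring the common constant out of both summands. Nothing further is needed.
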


\begin{remark}
In Corollary \ref{2.1.1b}, if we choose $\alpha =1,$ we get the same
inequality in \cite[Theorem 4]{ADDC10}.
\end{remark}

\subsection{For quasi-convex functions}

\begin{theorem}
\label{2.2}Let $f:$ $I\subset \lbrack 0,\infty )\rightarrow 
%TCIMACRO{\U{211d} }%
%BeginExpansion
\mathbb{R}
%EndExpansion
$ be a differentiable function on $I^{\circ }$ such that $f^{\prime }\in
L[a,b]$, where $a,b\in I^{\circ }$ with $a<b$. If $|f^{\prime }|^{q}$ is
quasi-convex on $[a,b]$ for some fixed $q\geq 1$, $x\in \lbrack a,b]$, $%
\lambda \in \left[ 0,1\right] $ then the following inequality for fractional
integrals holds%
\begin{eqnarray}
\left\vert S_{f}\left( x,\lambda ,\alpha ,a,b\right) \right\vert  &\leq
&A_{1}\left( \alpha ,\lambda \right) \left\{ \frac{\left( x-a\right)
^{\alpha +1}}{b-a}\left( \sup \left\{ \left\vert f^{\prime }\left( x\right)
\right\vert ^{q},\left\vert f^{\prime }\left( a\right) \right\vert
^{q}\right\} \right) ^{\frac{1}{q}}\right.   \label{2-3} \\
&&\left. +\frac{\left( b-x\right) ^{\alpha +1}}{b-a}\left( \sup \left\{
\left\vert f^{\prime }\left( x\right) \right\vert ^{q},\left\vert f^{\prime
}\left( b\right) \right\vert ^{q}\right\} \right) ^{\frac{1}{q}}\right\}  
\notag
\end{eqnarray}%
where 
\begin{equation*}
A_{1}\left( \alpha ,\lambda \right) =\frac{2\alpha \lambda ^{1+\frac{1}{%
\alpha }}+1}{\alpha +1}-\lambda .
\end{equation*}
\end{theorem}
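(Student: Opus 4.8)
The plan is to reproduce the architecture of the proof of Theorem \ref{2.1.1}, using the identity of Lemma \ref{2.1} but replacing the $s$-convexity estimate by the quasi-convexity bound. First I would invoke Lemma \ref{2.1}, apply the triangle inequality to the right-hand side, and push the modulus inside each of the two integrals. This yields an upper bound consisting of the integrals of $\left\vert t^{\alpha}-\lambda\right\vert\left\vert f^{\prime}(tx+(1-t)a)\right\vert$ and $\left\vert t^{\alpha}-\lambda\right\vert\left\vert f^{\prime}(tx+(1-t)b)\right\vert$, scaled respectively by $\frac{(x-a)^{\alpha+1}}{b-a}$ and $\frac{(b-x)^{\alpha+1}}{b-a}$, exactly as in the first lines of (\ref{2-2a}).

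Next I would apply the power-mean inequality to each integral with exponents $1-\frac{1}{q}$ and $\frac{1}{q}$, splitting $\left\vert t^{\alpha}-\lambda\right\vert=\left\vert t^{\alpha}-\lambda\right\vert^{1-\frac{1}{q}}\left\vert t^{\alpha}-\lambda\right\vert^{\frac{1}{q}}$ as in (\ref{2-2a}); this factors out a common $\left(\int_0^1\left\vert t^{\alpha}-\lambda\right\vert\,dt\right)^{1-\frac{1}{q}}$. The \emph{key} step, where quasi-convexity enters, is the estimate of the inner $q$-th-power integrals: since $\left\vert f^{\prime}\right\vert^{q}$ is quasi-convex, for every $t\in[0,1]$ we have $\left\vert f^{\prime}(tx+(1-t)a)\right\vert^{q}\le\sup\{\left\vert f^{\prime}(x)\right\vert^{q},\left\vert f^{\prime}(a)\right\vert^{q}\}$, and likewise with $b$ in place of $a$. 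The decisive simplification relative to the $s$-convex case is that this supremum is a constant in $t$, so it factors out and leaves precisely $\int_0^1\left\vert t^{\alpha}-\lambda\right\vert\,dt=A_{1}(\alpha,\lambda)$, the integral already computed in (\ref{2-2d}). Consequently the same quantity $A_{1}(\alpha,\lambda)$ appears both in the power-mean prefactor and inside the $q$-th root, and they combine as $A_{1}^{1-\frac{1}{q}}\cdot\left(A_{1}\cdot\sup\{\,\cdot\,\}\right)^{\frac{1}{q}}=A_{1}(\alpha,\lambda)\left(\sup\{\,\cdot\,\}\right)^{\frac{1}{q}}$, which is exactly the coefficient in (\ref{2-3}).

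Since every ingredient is explicit and borrowed from the already-established computations, I do not anticipate a genuine obstacle; the proof is essentially a specialization of the $s$-convex argument. The only point requiring care is the evaluation of $\int_0^1\left\vert t^{\alpha}-\lambda\right\vert\,dt$, which must be split at $t=\lambda^{1/\alpha}$, the point where $t^{\alpha}=\lambda$, so that the integrand is sign-definite on each piece; but this is precisely computation (\ref{2-2d}) and can be quoted directly. Assembling the two scaled terms then gives (\ref{2-3}), completing the proof.
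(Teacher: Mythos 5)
Your proposal is correct and follows essentially the same route as the paper: the paper also starts from the power-mean estimate (\ref{2-2a}) established in the proof of Theorem \ref{2.1.1}, bounds the inner $q$-th power integrals by the ($t$-independent) quasi-convexity supremum so that $\int_0^1\left\vert t^{\alpha}-\lambda\right\vert dt=A_1(\alpha,\lambda)$ factors out, and combines $A_1^{1-\frac{1}{q}}\cdot A_1^{\frac{1}{q}}=A_1$ exactly as you describe.
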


\begin{proof}
We proceed similarly as in the proof Theorem \ref{2.1.1}. Since $\left\vert
f^{\prime }\right\vert ^{q}$ is quasi-convex on $[a,b],$ for all $t\in \left[
0,1\right] $%
\begin{equation*}
\left\vert f^{\prime }\left( tx+\left( 1-t\right) a\right) \right\vert
^{q}\leq \sup \left\{ \left\vert f^{\prime }\left( x\right) \right\vert
^{q},\left\vert f^{\prime }\left( a\right) \right\vert ^{q}\right\} 
\end{equation*}%
and%
\begin{equation*}
\left\vert f^{\prime }\left( tx+\left( 1-t\right) b\right) \right\vert
^{q}\leq \sup \left\{ \left\vert f^{\prime }\left( x\right) \right\vert
^{q},\left\vert f^{\prime }\left( b\right) \right\vert ^{q}\right\} .
\end{equation*}%
Hence, from the inequality (\ref{2-2a}) we get%
\begin{eqnarray*}
&&\left\vert S_{f}\left( x,\lambda ,\alpha ,a,b\right) \right\vert  \\
&\leq &\left( \dint\limits_{0}^{1}\left\vert t^{\alpha }-\lambda \right\vert
dt\right) \left\{ \frac{\left( x-a\right) ^{\alpha +1}}{b-a}\left( \sup
\left\{ \left\vert f^{\prime }\left( x\right) \right\vert ^{q},\left\vert
f^{\prime }\left( a\right) \right\vert ^{q}\right\} \right) ^{\frac{1}{q}%
}\right.  \\
&&\left. +\frac{\left( b-x\right) ^{\alpha +1}}{b-a}\left( \sup \left\{
\left\vert f^{\prime }\left( x\right) \right\vert ^{q},\left\vert f^{\prime
}\left( b\right) \right\vert ^{q}\right\} \right) ^{\frac{1}{q}}\right\}  \\
&\leq &A_{1}\left( \alpha ,\lambda \right) \left\{ \frac{\left( x-a\right)
^{\alpha +1}}{b-a}\left( \sup \left\{ \left\vert f^{\prime }\left( x\right)
\right\vert ^{q},\left\vert f^{\prime }\left( a\right) \right\vert
^{q}\right\} \right) ^{\frac{1}{q}}\right.  \\
&&\left. +\frac{\left( b-x\right) ^{\alpha +1}}{b-a}\left( \sup \left\{
\left\vert f^{\prime }\left( x\right) \right\vert ^{q},\left\vert f^{\prime
}\left( b\right) \right\vert ^{q}\right\} \right) ^{\frac{1}{q}}\right\} 
\end{eqnarray*}%
which completes the proof.
\end{proof}

\begin{corollary}
Under the assumptions of Theorem \ref{2.2} with $q=1,$ the inequality (\ref%
{2-3}) reduced to the following inequality%
\begin{eqnarray*}
\left\vert S_{f}\left( x,\lambda ,\alpha ,a,b\right) \right\vert  &\leq
&A_{1}\left( \alpha ,\lambda \right) \left\{ \frac{\left( x-a\right)
^{\alpha +1}}{b-a}\left( \sup \left\{ \left\vert f^{\prime }\left( x\right)
\right\vert ,\left\vert f^{\prime }\left( a\right) \right\vert \right\}
\right) \right.  \\
&&+\left. \frac{\left( b-x\right) ^{\alpha +1}}{b-a}\left( \sup \left\{
\left\vert f^{\prime }\left( x\right) \right\vert ,\left\vert f^{\prime
}\left( b\right) \right\vert \right\} \right) \right\} .
\end{eqnarray*}
\end{corollary}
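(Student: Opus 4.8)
The plan is to derive this statement as an immediate specialization of Theorem \ref{2.2}, taking $q=1$ in inequality (\ref{2-3}); no new integral estimate is required. First I would note that the hypothesis of Theorem \ref{2.2}, namely that $\left\vert f^{\prime}\right\vert^{q}$ be quasi-convex on $[a,b]$, becomes at $q=1$ simply the requirement that $\left\vert f^{\prime}\right\vert$ be quasi-convex, which is precisely what ``the assumptions of Theorem \ref{2.2} with $q=1$'' provides. Thus the premises are consistent and (\ref{2-3}) is directly applicable.

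Next I would carry out the substitution $q=1$ termwise on the right-hand side of (\ref{2-3}). The constant $A_{1}(\alpha,\lambda)=\frac{2\alpha\lambda^{1+\frac1\alpha}+1}{\alpha+1}-\lambda$ carries no dependence on $q$ and is left untouched. Each outer exponent $\left(\,\cdot\,\right)^{1/q}$ becomes $\left(\,\cdot\,\right)^{1}$, and each inner supremum $\sup\{\left\vert f^{\prime}(x)\right\vert^{q},\left\vert f^{\prime}(a)\right\vert^{q}\}$ collapses to $\sup\{\left\vert f^{\prime}(x)\right\vert,\left\vert f^{\prime}(a)\right\vert\}$, and likewise with $b$ in place of $a$. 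Assembling these substitutions reproduces exactly the displayed inequality.

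I expect no genuine obstacle: the entire content is the trivial identities $\left\vert f^{\prime}\right\vert^{1}=\left\vert f^{\prime}\right\vert$ and $\left(\,\cdot\,\right)^{1/1}=\left(\,\cdot\,\right)$. If one wishes to make the passage transparent even for general $q$, one may observe that, since $t\mapsto t^{q}$ is increasing on $[0,\infty)$ for $q>0$, one has $\sup\{\left\vert f^{\prime}(x)\right\vert^{q},\left\vert f^{\prime}(a)\right\vert^{q}\}=\left(\sup\{\left\vert f^{\prime}(x)\right\vert,\left\vert f^{\prime}(a)\right\vert\}\right)^{q}$, whence $\left(\sup\{\left\vert f^{\prime}(x)\right\vert^{q},\left\vert f^{\prime}(a)\right\vert^{q}\}\right)^{1/q}=\sup\{\left\vert f^{\prime}(x)\right\vert,\left\vert f^{\prime}(a)\right\vert\}$. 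This already exhibits the right-hand side of (\ref{2-3}) as independent of $q$, so the corollary is simply the instance $q=1$ recorded explicitly.
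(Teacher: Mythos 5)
Your proposal is correct and matches the paper's (implicit) argument: the corollary is stated there as an immediate specialization of inequality (\ref{2-3}) at $q=1$, exactly the termwise substitution you perform. Your added observation that $\left(\sup\left\{ \left\vert f^{\prime}(x)\right\vert ^{q},\left\vert f^{\prime}(a)\right\vert ^{q}\right\}\right)^{1/q}=\sup\left\{ \left\vert f^{\prime}(x)\right\vert ,\left\vert f^{\prime}(a)\right\vert \right\}$, so that the right-hand side of (\ref{2-3}) is in fact independent of $q$, is a correct and slightly sharper remark than anything the paper records, but it does not change the route.
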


\begin{corollary}
Under the assumptions of Theorem \ref{2.2} with $x=\frac{a+b}{2},\ \lambda =%
\frac{1}{3},$from the inequality (\ref{2-3}) we get the following Simpson
type inequality or fractional integrals%
\begin{eqnarray*}
&&\left\vert \frac{1}{6}\left[ f(a)+4f\left( \frac{a+b}{2}\right) +f(b)%
\right] -\frac{\Gamma \left( \alpha +1\right) 2^{\alpha -1}}{\left(
b-a\right) ^{\alpha }}\left[ J_{\left( \frac{a+b}{2}\right) ^{-}}^{\alpha
}f(a)+J_{\left( \frac{a+b}{2}\right) ^{+}}^{\alpha }f(b)\right] \right\vert 
\\
&\leq &\frac{b-a}{4}A_{1}\left( \alpha ,\frac{1}{3}\right) \left[ \sup
\left\{ \left\vert f^{\prime }\left( \frac{a+b}{2}\right) \right\vert
,\left\vert f^{\prime }\left( a\right) \right\vert \right\} +\sup \left\{
\left\vert f^{\prime }\left( \frac{a+b}{2}\right) \right\vert ,\left\vert
f^{\prime }\left( b\right) \right\vert \right\} \right] .
\end{eqnarray*}
\end{corollary}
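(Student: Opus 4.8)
The plan is to obtain this corollary as a direct specialization of Theorem~\ref{2.2}, so that no new estimation is required: the entire argument is a substitution followed by bookkeeping of constants. First I would set $x=\frac{a+b}{2}$ and $\lambda=\frac{1}{3}$ in inequality~\eqref{2-3}. The key simplifying observation is that with this choice of $x$ one has $x-a=b-x=\frac{b-a}{2}$, so every occurrence of $(x-a)^{\alpha}$, $(b-x)^{\alpha}$, $(x-a)^{\alpha+1}$ and $(b-x)^{\alpha+1}$ collapses to a single power of $\frac{b-a}{2}$.

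Next I would identify the left-hand side. Multiplying $S_{f}\left(\frac{a+b}{2},\frac{1}{3},\alpha,a,b\right)$ by the positive constant $\frac{2^{\alpha-1}}{(b-a)^{\alpha-1}}$ and inserting $x-a=b-x=\frac{b-a}{2}$ into the definition of $S_{f}$ given before Lemma~\ref{2.1}, the two boundary terms combine as $\frac{1}{6}\big[f(a)+f(b)\big]+\frac{2}{3}f\big(\frac{a+b}{2}\big)=\frac{1}{6}\big[f(a)+4f(\frac{a+b}{2})+f(b)\big]$, while the Riemann--Liouville terms pick up exactly the factor $\frac{\Gamma(\alpha+1)2^{\alpha-1}}{(b-a)^{\alpha}}$. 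This is the same computation already carried out in the Simpson-type corollary following Theorem~\ref{2.1.1}, so I would simply invoke it. Thus the displayed Simpson expression equals $\frac{2^{\alpha-1}}{(b-a)^{\alpha-1}}S_{f}$, and its absolute value is $\frac{2^{\alpha-1}}{(b-a)^{\alpha-1}}\left\vert S_{f}\right\vert$.

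For the right-hand side I would multiply the bound in~\eqref{2-3} by the same positive factor $\frac{2^{\alpha-1}}{(b-a)^{\alpha-1}}$, which preserves the inequality. Using $\frac{(x-a)^{\alpha+1}}{b-a}=\frac{(b-x)^{\alpha+1}}{b-a}=\frac{(b-a)^{\alpha}}{2^{\alpha+1}}$, the product of the constants reduces to $\frac{2^{\alpha-1}}{(b-a)^{\alpha-1}}\cdot\frac{(b-a)^{\alpha}}{2^{\alpha+1}}=\frac{b-a}{4}$, which is precisely the coefficient in front of $A_{1}(\alpha,\frac{1}{3})$ in the claimed bound. Finally, since $t\mapsto t^{1/q}$ is increasing on $[0,\infty)$, one has $\big(\sup\{|f^{\prime}(x)|^{q},|f^{\prime}(a)|^{q}\}\big)^{1/q}=\sup\{|f^{\prime}(x)|,|f^{\prime}(a)|\}$ and likewise with $b$ in place of $a$, so the $q$-th powers drop out and the right-hand side takes the stated form with $x=\frac{a+b}{2}$.

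The argument presents no genuine obstacle; the only point demanding care is the arithmetic of the powers of $2$ and of $(b-a)$ when passing from $S_{f}$ to the normalized Simpson functional, since this bookkeeping must reproduce both the factor $\frac{b-a}{4}$ on the right and the factor $\frac{\Gamma(\alpha+1)2^{\alpha-1}}{(b-a)^{\alpha}}$ on the left.
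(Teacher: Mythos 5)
Your proposal is correct and is exactly the derivation the paper intends: substitute $x=\frac{a+b}{2}$, $\lambda=\frac{1}{3}$ into (\ref{2-3}), normalize by $\frac{2^{\alpha-1}}{(b-a)^{\alpha-1}}$ as in the Simpson-type corollary after Theorem \ref{2.1.1}, and note that $\left(\sup\left\{u^{q},v^{q}\right\}\right)^{1/q}=\sup\left\{u,v\right\}$ for $u,v\geq 0$, which removes the exponent $q$ from the bound. The bookkeeping of the powers of $2$ and $(b-a)$ checks out, so nothing is missing.
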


\begin{corollary}
Under the assumptions of Theorem \ref{2.2} with $x=\frac{a+b}{2},\ \lambda
=0,$from the inequality (\ref{2-3}) we get the following midpoint type
inequality or fractional integrals%
\begin{eqnarray*}
&&\left\vert f\left( \frac{a+b}{2}\right) -\frac{\Gamma \left( \alpha
+1\right) 2^{\alpha -1}}{\left( b-a\right) ^{\alpha }}\left[ J_{\left( \frac{%
a+b}{2}\right) ^{-}}^{\alpha }f(a)+J_{\left( \frac{a+b}{2}\right)
^{+}}^{\alpha }f(b)\right] \right\vert  \\
&\leq &\frac{b-a}{4}\left( \frac{1}{\alpha +1}\right) \left\{ \left[ \sup
\left\{ \left\vert f^{\prime }\left( \frac{a+b}{2}\right) \right\vert
^{q},\left\vert f^{\prime }\left( a\right) \right\vert ^{q}\right\} \right]
^{\frac{1}{q}}\right.  \\
&&\left. +\left[ \sup \left\{ \left\vert f^{\prime }\left( \frac{a+b}{2}%
\right) \right\vert ^{q},\left\vert f^{\prime }\left( b\right) \right\vert
^{q}\right\} \right] ^{\frac{1}{q}}\right\} .
\end{eqnarray*}
\end{corollary}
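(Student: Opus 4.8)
The plan is to specialize Theorem \ref{2.2} by setting $x=\frac{a+b}{2}$ and $\lambda=0$ in inequality (\ref{2-3}), and then to rewrite the resulting estimate in normalized midpoint form by scaling with the factor $\frac{2^{\alpha-1}}{(b-a)^{\alpha-1}}$, exactly as was done in the preceding Simpson-type corollary. Since the statement is an immediate consequence of an already-proved theorem, the entire argument is a substitution followed by careful bookkeeping of constants.

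First I would evaluate the constant $A_{1}$ at $\lambda=0$. Because $1+\frac{1}{\alpha}>0$ we have $0^{1+1/\alpha}=0$, so $A_{1}(\alpha,0)=\frac{1}{\alpha+1}$. Next, with $x=\frac{a+b}{2}$ the two geometric lengths coincide, $x-a=b-x=\frac{b-a}{2}$, which gives $\frac{(x-a)^{\alpha+1}}{b-a}=\frac{(b-x)^{\alpha+1}}{b-a}=\frac{(b-a)^{\alpha}}{2^{\alpha+1}}$ and $\frac{(x-a)^{\alpha}+(b-x)^{\alpha}}{b-a}=\frac{(b-a)^{\alpha-1}}{2^{\alpha-1}}$. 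This last identity is what lets me recognize the left-hand side: upon multiplying $S_{f}\left(\frac{a+b}{2},0,\alpha,a,b\right)$ by $\frac{2^{\alpha-1}}{(b-a)^{\alpha-1}}$, the leading term collapses to $f\left(\frac{a+b}{2}\right)$ and the Riemann--Liouville term acquires the prefactor $\frac{\Gamma(\alpha+1)2^{\alpha-1}}{(b-a)^{\alpha}}$, producing precisely the absolute value displayed in the corollary.

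It then remains to track the constants on the right-hand side. Substituting these simplifications into (\ref{2-3}) and multiplying through by the same normalizing factor $\frac{2^{\alpha-1}}{(b-a)^{\alpha-1}}$, I would combine the scalars as $\frac{2^{\alpha-1}}{(b-a)^{\alpha-1}}\cdot\frac{1}{\alpha+1}\cdot\frac{(b-a)^{\alpha}}{2^{\alpha+1}}=\frac{b-a}{4(\alpha+1)}$, which is exactly the coefficient $\frac{b-a}{4}\left(\frac{1}{\alpha+1}\right)$ appearing in the statement. The two supremum factors $\left[\sup\{|f'(\frac{a+b}{2})|^{q},|f'(a)|^{q}\}\right]^{1/q}$ and $\left[\sup\{|f'(\frac{a+b}{2})|^{q},|f'(b)|^{q}\}\right]^{1/q}$ carry over unchanged from (\ref{2-3}).

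There is no genuine analytic obstacle here: the estimate is a direct specialization of Theorem \ref{2.2}. The only point requiring care is the arithmetic of the powers of $2$ and of $(b-a)$ when the normalizing factor $\frac{2^{\alpha-1}}{(b-a)^{\alpha-1}}$ is distributed across both the functional expression on the left and the constant $A_{1}(\alpha,0)\frac{(b-a)^{\alpha}}{2^{\alpha+1}}$ on the right. Checking that these powers cancel to leave $f\left(\frac{a+b}{2}\right)$ on the left and $\frac{b-a}{4(\alpha+1)}$ on the right is the single computation to be verified, and completes the proof.
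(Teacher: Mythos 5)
Your proposal is correct and follows exactly the route the paper intends: substitute $x=\frac{a+b}{2}$, $\lambda=0$ into (\ref{2-3}), note $A_{1}(\alpha,0)=\frac{1}{\alpha+1}$ and $x-a=b-x=\frac{b-a}{2}$, and multiply through by $\frac{2^{\alpha-1}}{(b-a)^{\alpha-1}}$ so that the left side becomes the displayed midpoint expression and the right-side constant collapses to $\frac{b-a}{4(\alpha+1)}$. All the arithmetic with the powers of $2$ and $(b-a)$ checks out, so nothing is missing.
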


\begin{corollary}
Under the assumptions of Theorem \ref{2.2} with$\ \lambda =1,$from the
inequality (\ref{2-3}) we get the following trapezoid type inequality or
fractional integrals%
\begin{eqnarray*}
&&\left\vert \frac{\left( x-a\right) ^{\alpha }f(a)+\left( b-x\right)
^{\alpha }f(b)}{b-a}-\frac{\Gamma \left( \alpha +1\right) }{b-a}\left[
J_{x^{-}}^{\alpha }f(a)+J_{x^{+}}^{\alpha }f(b)\right] \right\vert  \\
&\leq &\left( \frac{\alpha }{\alpha +1}\right) \left\{ \frac{\left(
x-a\right) ^{\alpha +1}}{b-a}\left[ \sup \left\{ \left\vert f^{\prime
}\left( x\right) \right\vert ^{q},\left\vert f^{\prime }\left( a\right)
\right\vert ^{q}\right\} \right] ^{\frac{1}{q}}\right.  \\
&&\left. +\frac{\left( b-x\right) ^{\alpha +1}}{b-a}\left[ \sup \left\{
\left\vert f^{\prime }\left( x\right) \right\vert ^{q},\left\vert f^{\prime
}\left( b\right) \right\vert ^{q}\right\} \right] ^{\frac{1}{q}}\right\} .
\end{eqnarray*}
\end{corollary}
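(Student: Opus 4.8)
The plan is to obtain this corollary as a direct specialization of Theorem~\ref{2.2} at the single parameter value $\lambda = 1$; no new analytic work is required beyond substitution and a short arithmetic simplification. First I would set $\lambda = 1$ in the defining expression for $S_{f}(x,\lambda,\alpha,a,b)$. Since the first summand carries the factor $(1-\lambda)$, it vanishes identically, and what remains is exactly
\[
S_{f}(x,1,\alpha,a,b) = \frac{(x-a)^{\alpha}f(a)+(b-x)^{\alpha}f(b)}{b-a} - \frac{\Gamma(\alpha+1)}{b-a}\left[ J_{x^{-}}^{\alpha}f(a)+J_{x^{+}}^{\alpha}f(b)\right],
\]
which is precisely the trapezoid-type quantity appearing on the left-hand side of the claimed inequality.

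Next I would evaluate the constant $A_{1}(\alpha,\lambda)$ at $\lambda=1$. Substituting into $A_{1}(\alpha,\lambda)=\frac{2\alpha\lambda^{1+1/\alpha}+1}{\alpha+1}-\lambda$, and using $\lambda^{1+1/\alpha}=1$, gives $\frac{2\alpha+1}{\alpha+1}-1=\frac{\alpha}{\alpha+1}$, which matches the leading factor in the corollary. With both the left-hand side and the constant identified, the inequality (\ref{2-3}) of Theorem~\ref{2.2} specializes verbatim to the assertion, since the bracketed supremum terms on the right depend only on the endpoints $x,a,b$ and are unaffected by the choice of $\lambda$.

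The only steps requiring any care are the clean evaluation $A_{1}(\alpha,1)=\frac{\alpha}{\alpha+1}$ and the verification that the $(1-\lambda)f(x)$ contribution to $S_{f}$ indeed drops out; both are immediate. Consequently there is no genuine obstacle here: the corollary is a pure instantiation of the already-established Theorem~\ref{2.2}, and the proof amounts to recording these two substitutions and noting that the quasi-convexity hypothesis on $|f^{\prime}|^{q}$ is inherited directly from that theorem.
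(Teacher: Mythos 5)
Your proposal is correct and is exactly the paper's route: the corollary is a pure instantiation of inequality (\ref{2-3}) at $\lambda=1$, where the $(1-\lambda)f(x)$ term in $S_{f}$ drops out and $A_{1}(\alpha,1)=\frac{2\alpha+1}{\alpha+1}-1=\frac{\alpha}{\alpha+1}$ gives the stated constant. Nothing further is needed, since the supremum factors in (\ref{2-3}) do not involve $\lambda$.
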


\begin{corollary}
Let the assumptions of Theorem \ref{2.2} hold. If $\ \left\vert f^{\prime
}(x)\right\vert \leq M$ for all $x\in \left[ a,b\right] $ and $\lambda =0,$
then from the inequality (\ref{2-3}) we get the following Ostrowski type
inequality or fractional integrals%
\begin{eqnarray*}
&&\left\vert \left[ \frac{\left( x-a\right) ^{\alpha }+\left( b-x\right)
^{\alpha }}{b-a}\right] f(x)-\frac{\Gamma \left( \alpha +1\right) }{b-a}%
\left[ J_{x^{-}}^{\alpha }f(a)+J_{x^{+}}^{\alpha }f(b)\right] \right\vert  \\
&\leq &\frac{M}{\alpha +1}\left[ \frac{\left( x-a\right) ^{\alpha +1}+\left(
b-x\right) ^{\alpha +1}}{b-a}\right] ,
\end{eqnarray*}
\end{corollary}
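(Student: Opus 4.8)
The plan is to obtain this corollary as a direct specialization of Theorem \ref{2.2} at the parameter value $\lambda = 0$, followed by the crude uniform bound coming from $\left\vert f^{\prime}\right\vert \leq M$. First I would substitute $\lambda = 0$ into the constant $A_{1}\left( \alpha ,\lambda \right) =\frac{2\alpha \lambda ^{1+\frac{1}{\alpha }}+1}{\alpha +1}-\lambda$; since $1+\frac{1}{\alpha } > 0$ for $\alpha > 0$, the term $\lambda ^{1+\frac{1}{\alpha }}$ vanishes at $\lambda = 0$, so that $A_{1}\left( \alpha ,0\right) =\frac{1}{\alpha +1}$. This is precisely the factor appearing in front of the right-hand side of the claimed inequality.

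Next I would identify the left-hand side. Setting $\lambda = 0$ in the definition of $S_{f}$ annihilates the $\lambda$-weighted endpoint term $\lambda \left[ \frac{\left( x-a\right) ^{\alpha }f(a)+\left( b-x\right) ^{\alpha }f(b)}{b-a}\right]$ entirely and leaves the coefficient $\left( 1-\lambda \right) = 1$ on the $f(x)$ term, so that $S_{f}\left( x,0,\alpha ,a,b\right) =\left[ \frac{\left( x-a\right) ^{\alpha }+\left( b-x\right) ^{\alpha }}{b-a}\right] f(x)-\frac{\Gamma \left( \alpha +1\right) }{b-a}\left[ J_{x^{-}}^{\alpha }f(a)+J_{x^{+}}^{\alpha }f(b)\right]$, which is exactly the quantity inside the absolute value on the left of the corollary.

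It then remains to simplify the two supremum factors on the right of (\ref{2-3}). Under the hypothesis $\left\vert f^{\prime }(t)\right\vert \leq M$ for every $t\in \left[ a,b\right]$, both $\sup \left\{ \left\vert f^{\prime }\left( x\right) \right\vert ^{q},\left\vert f^{\prime }\left( a\right) \right\vert ^{q}\right\}$ and $\sup \left\{ \left\vert f^{\prime }\left( x\right) \right\vert ^{q},\left\vert f^{\prime }\left( b\right) \right\vert ^{q}\right\}$ are at most $M^{q}$, so each factor $\left( \sup \left\{ \cdots \right\} \right) ^{\frac{1}{q}}$ is bounded by $M$. Substituting $A_{1}\left( \alpha ,0\right) =\frac{1}{\alpha +1}$ and these bounds into the inequality of Theorem \ref{2.2} and factoring out $M$ yields $\frac{M}{\alpha +1}\left[ \frac{\left( x-a\right) ^{\alpha +1}+\left( b-x\right) ^{\alpha +1}}{b-a}\right]$, the asserted bound.

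I do not expect any genuine obstacle here, as the argument is a one-step reduction of an already-proved theorem; the only points requiring minor care are verifying that the $\lambda ^{1+\frac{1}{\alpha }}$ contribution to $A_{1}$ really does vanish at $\lambda = 0$ (valid precisely because $\alpha > 0$) and observing that the uniform bound collapses the two $q$-dependent supremum factors to $M$, so that the resulting estimate is in fact independent of $q$.
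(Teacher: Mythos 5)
Your proposal is correct and follows exactly the route the paper intends: the corollary is a direct specialization of inequality (\ref{2-3}) at $\lambda=0$, where $A_{1}(\alpha,0)=\frac{1}{\alpha+1}$, $S_{f}(x,0,\alpha,a,b)$ reduces to the stated left-hand side, and each supremum factor $\left(\sup\{\cdot\}\right)^{\frac{1}{q}}$ is bounded by $M$. Nothing further is needed.
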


\subsection{For $m-$convex functions}

Similarly lemma \ref{2.1}, we can proved the following lemma

\begin{lemma}
\label{2.1a},Let $f:I\subset \mathbb{R\rightarrow R}$ be a differentiable
mapping on $I^{\circ }$ such that $f^{\prime }\in L[ma,mb]$, where $m\in
\left( 0,1\right] $, $ma,mb\in I$ with $a<b$. Then for all $x\in \lbrack
a,b] $ , $\lambda \in \left[ 0,1\right] $ and $\alpha >0$ we have:%
\begin{eqnarray*}
&&S_{f}\left( mx,\lambda ,\alpha ,ma,mb\right) =\frac{m^{\alpha }\left(
x-a\right) ^{\alpha +1}}{b-a}\dint\limits_{0}^{1}\left( t^{\alpha }-\lambda
\right) f^{\prime }\left( tmx+m\left( 1-t\right) a\right) dt \\
&&+\frac{m^{\alpha }\left( b-x\right) ^{\alpha +1}}{b-a}\dint\limits_{0}^{1}%
\left( \lambda -t^{\alpha }\right) f^{\prime }\left( tmx+m\left( 1-t\right)
b\right) dt.
\end{eqnarray*}
\end{lemma}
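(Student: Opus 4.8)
The plan is to deduce this lemma directly from Lemma \ref{2.1} by a rescaling of the endpoints, rather than repeating the integration-by-parts computation. The key observation is that the identity in Lemma \ref{2.1} holds for \emph{any} admissible choice of endpoints and interior point; I would therefore apply it verbatim to the configuration obtained by replacing $a$, $b$, $x$ by $ma$, $mb$, $mx$ respectively.

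First I would check that the hypotheses transfer. Since $m\in(0,1]$ we have $m>0$, so $a<b$ forces $ma<mb$ and $x\in[a,b]$ forces $mx\in[ma,mb]$. The points $ma,mb$ belong to $I$ by assumption, $f$ is differentiable on $I^{\circ}$, and the stated hypothesis $f^{\prime}\in L[ma,mb]$ is exactly the integrability requirement on the interval $[ma,mb]$. Hence Lemma \ref{2.1} is applicable with these substituted arguments, and its conclusion expresses $S_{f}(mx,\lambda,\alpha,ma,mb)$ as a sum of two integrals with prefactors $(mx-ma)^{\alpha+1}/(mb-ma)$ and $(mb-mx)^{\alpha+1}/(mb-ma)$ and integrands involving $f^{\prime}(t\,mx+(1-t)ma)$ and $f^{\prime}(t\,mx+(1-t)mb)$.

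The remaining work is routine algebra. Using the homogeneity relations $(mx-ma)^{\alpha+1}=m^{\alpha+1}(x-a)^{\alpha+1}$, $(mb-mx)^{\alpha+1}=m^{\alpha+1}(b-x)^{\alpha+1}$ and $mb-ma=m(b-a)$, each prefactor loses one factor of $m$, so that $m^{\alpha+1}/m=m^{\alpha}$ and the coefficients collapse to the claimed $m^{\alpha}(x-a)^{\alpha+1}/(b-a)$ and $m^{\alpha}(b-x)^{\alpha+1}/(b-a)$. Inside the integrands I would rewrite $(1-t)ma=m(1-t)a$ and $(1-t)mb=m(1-t)b$ to match the stated arguments $f^{\prime}(tmx+m(1-t)a)$ and $f^{\prime}(tmx+m(1-t)b)$. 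This produces the asserted identity.

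There is no genuine obstacle in this argument; the only point demanding care is the bookkeeping of the power of $m$, namely verifying that the single factor of $m$ in the denominator $mb-ma$ cancels one of the $\alpha+1$ factors in the numerator to leave exactly $m^{\alpha}$. An equivalent but longer alternative would be to repeat the two integration-by-parts steps \eqref{2-1a} and \eqref{2-1b} with $a,b,x$ replaced throughout by $ma,mb,mx$; since Lemma \ref{2.1} is already established in those free variables, the rescaling route is shorter and avoids redoing the Riemann--Liouville manipulation.
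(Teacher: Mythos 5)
Your proof is correct, but it takes a genuinely different route from the paper's. The paper in fact writes out no proof for this lemma at all: it only remarks ``Similarly [to] Lemma \ref{2.1}, we can prove the following lemma,'' i.e.\ the intended argument is to repeat the integration-by-parts and change-of-variable computations \eqref{2-1a} and \eqref{2-1b} with $ma$, $mb$, $mx$ in place of $a$, $b$, $x$. You instead invoke Lemma \ref{2.1} as a black box at the rescaled configuration: you check that the hypotheses transfer ($m>0$ gives $ma<mb$ and $mx\in[ma,mb]$; $ma,mb\in I$ and $f^{\prime}\in L[ma,mb]$ are assumed), and then use homogeneity, $(mx-ma)^{\alpha+1}=m^{\alpha+1}(x-a)^{\alpha+1}$, $(mb-mx)^{\alpha+1}=m^{\alpha+1}(b-x)^{\alpha+1}$ and $mb-ma=m(b-a)$, so each prefactor collapses to $m^{\alpha}(x-a)^{\alpha+1}/(b-a)$ and $m^{\alpha}(b-x)^{\alpha+1}/(b-a)$, with the integrand arguments rewritten as $f^{\prime}(tmx+m(1-t)a)$ and $f^{\prime}(tmx+m(1-t)b)$. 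This reduction is sound and arguably preferable: it exhibits Lemma \ref{2.1a} as literally an instance of Lemma \ref{2.1}, so no Riemann--Liouville manipulation needs to be redone, and the only delicate point is the cancellation of one factor of $m$ between numerator and denominator, which you carry out correctly. What the paper's implied re-derivation buys in exchange is self-containedness, at the cost of duplicating the entire computation; your version makes the logical dependence explicit and shorter.
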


\begin{theorem}
\label{2.3}Let $f:$ $I\subset \lbrack 0,\infty )\rightarrow 
%TCIMACRO{\U{211d} }%
%BeginExpansion
\mathbb{R}
%EndExpansion
$ be a differentiable function on $I^{\circ }$ such that $f^{\prime }\in
L[ma,mb]$, where $m\in \left( 0,1\right] $, $a,b\in I$ $^{\circ }$ with $a<b$%
. If $|f^{\prime }|^{q}$ is $m-$convex on $[ma,mb]$ for some fixed $q\geq 1$%
, $x\in \lbrack a,b]$, $\lambda \in \left[ 0,1\right] $ and $\alpha >0$ then
the following inequality for fractional integrals holds%
\begin{eqnarray}
&&\left\vert S_{f}\left( mx,\lambda ,\alpha ,ma,mb\right) \right\vert 
\label{2-4} \\
&\leq &A_{1}\left( \alpha ,\lambda \right) ^{1-\frac{1}{q}}\left\{ \frac{%
m^{\alpha }\left( x-a\right) ^{\alpha +1}}{b-a}\left( \left\vert f^{\prime
}\left( mx\right) \right\vert ^{q}A_{2}\left( \alpha ,\lambda \right)
+m\left\vert f^{\prime }\left( a\right) \right\vert ^{q}A_{3}\left( \alpha
,\lambda \right) \right) ^{\frac{1}{q}}\right.   \notag \\
&&\left. +\frac{m^{\alpha }\left( b-x\right) ^{\alpha +1}}{b-a}\left(
\left\vert f^{\prime }\left( mx\right) \right\vert ^{q}A_{2}\left( \alpha
,\lambda \right) +m\left\vert f^{\prime }\left( b\right) \right\vert
^{q}A_{3}\left( \alpha ,\lambda \right) \right) ^{\frac{1}{q}}\right\}  
\notag
\end{eqnarray}%
where 
\begin{eqnarray*}
A_{1}\left( \alpha ,\lambda \right)  &=&\frac{2\alpha \lambda ^{1+\frac{1}{%
\alpha }}+1}{\alpha +1}-\lambda , \\
A_{2}\left( \alpha ,\lambda \right)  &=&\frac{\alpha \lambda ^{1+\frac{2}{%
\alpha }}+1}{\alpha +2}-\frac{\lambda }{2}, \\
A_{3}\left( \alpha ,\lambda \right)  &=&\frac{2\alpha \lambda ^{1+\frac{1}{%
\alpha }}+1}{\alpha +1}-\frac{\alpha \lambda ^{1+\frac{2}{\alpha }}+1}{%
\alpha +2}-\frac{\lambda }{2}.
\end{eqnarray*}
\end{theorem}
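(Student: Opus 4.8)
The plan is to follow the scheme of the proof of Theorem \ref{2.1.1} verbatim, replacing Lemma \ref{2.1} by Lemma \ref{2.1a} and the $s$-convexity estimate by the $m$-convexity estimate. Starting from the identity in Lemma \ref{2.1a}, I would first take absolute values, use the triangle inequality together with $\left\vert \int \right\vert \le \int \left\vert \cdot \right\vert$, and then apply the power-mean inequality to each of the two integrals, pulling out the weight $\left\vert t^{\alpha}-\lambda \right\vert$. This produces the analogue of (\ref{2-2a}),
\begin{align*}
\left\vert S_{f}\left( mx,\lambda ,\alpha ,ma,mb\right) \right\vert &\le \frac{m^{\alpha }\left( x-a\right) ^{\alpha +1}}{b-a}\left( \int_{0}^{1}\left\vert t^{\alpha }-\lambda \right\vert dt\right) ^{1-\frac{1}{q}}\left( \int_{0}^{1}\left\vert t^{\alpha }-\lambda \right\vert \left\vert f^{\prime }\left( tmx+m(1-t)a\right) \right\vert ^{q}dt\right) ^{\frac{1}{q}} \\
&\quad +\frac{m^{\alpha }\left( b-x\right) ^{\alpha +1}}{b-a}\left( \int_{0}^{1}\left\vert t^{\alpha }-\lambda \right\vert dt\right) ^{1-\frac{1}{q}}\left( \int_{0}^{1}\left\vert t^{\alpha }-\lambda \right\vert \left\vert f^{\prime }\left( tmx+m(1-t)b\right) \right\vert ^{q}dt\right) ^{\frac{1}{q}},
\end{align*}
the only difference from (\ref{2-2a}) being the extra factor $m^{\alpha}$ inherited from Lemma \ref{2.1a}.

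Next I would invoke the $m$-convexity of $\left\vert f^{\prime }\right\vert ^{q}$. Writing $tmx+m(1-t)a = t\cdot (mx) + m(1-t)\cdot a$ and applying the defining inequality to the pair $(mx,a)$ gives $\left\vert f^{\prime }\left( tmx+m(1-t)a\right) \right\vert ^{q}\le t\left\vert f^{\prime }(mx)\right\vert ^{q}+m(1-t)\left\vert f^{\prime }(a)\right\vert ^{q}$, and likewise with the pair $(mx,b)$ for the second integral. Substituting these bounds reduces the task to evaluating the three weighted moments
\begin{equation*}
A_{1}=\int_{0}^{1}\left\vert t^{\alpha }-\lambda \right\vert dt,\qquad A_{2}=\int_{0}^{1}\left\vert t^{\alpha }-\lambda \right\vert t\,dt,\qquad A_{3}=\int_{0}^{1}\left\vert t^{\alpha }-\lambda \right\vert (1-t)\,dt,
\end{equation*}
so that the inner integrals collapse to $\left\vert f^{\prime }(mx)\right\vert ^{q}A_{2}+m\left\vert f^{\prime }(a)\right\vert ^{q}A_{3}$ and $\left\vert f^{\prime }(mx)\right\vert ^{q}A_{2}+m\left\vert f^{\prime }(b)\right\vert ^{q}A_{3}$, which is precisely the shape of (\ref{2-4}).

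The computational core is evaluating these moments, and this is where I expect the only real bookkeeping to lie. In each case the sign of $t^{\alpha}-\lambda$ flips at $t=\lambda ^{1/\alpha}$, so I would split every integral as $\int_{0}^{\lambda ^{1/\alpha}}(\lambda -t^{\alpha})(\cdot)\,dt+\int_{\lambda ^{1/\alpha}}^{1}(t^{\alpha}-\lambda)(\cdot)\,dt$. The value $A_{1}=\frac{2\alpha \lambda ^{1+1/\alpha}+1}{\alpha +1}-\lambda$ is already established in (\ref{2-2d}); a direct antiderivative computation with the weight $t$ yields $A_{2}=\frac{\alpha \lambda ^{1+2/\alpha}+1}{\alpha +2}-\frac{\lambda}{2}$, and then $A_{3}=A_{1}-A_{2}$ gives the stated closed form. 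Unlike in Theorem \ref{2.1.1}, these constants carry no dependence on $s$, precisely because $m$-convexity uses the linear weights $t$ and $m(1-t)$ rather than $t^{s}$ and $(1-t)^{s}$. The delicate point is tracking the boundary contributions at $\lambda ^{1/\alpha}$ so that the mixed powers $\lambda ^{1+2/\alpha}$ recombine with the correct coefficient $\frac{\alpha}{\alpha +2}$; once this is done, assembling $A_{1}^{1-1/q}$ with the two $(A_{2},A_{3})$ factors delivers (\ref{2-4}) and completes the proof.
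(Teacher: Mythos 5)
Your proposal is correct and coincides essentially step for step with the paper's own proof: the same power-mean reduction starting from Lemma \ref{2.1a} (the paper's inequality (\ref{2-4a})), the same $m$-convexity bound applied to the pairs $(mx,a)$ and $(mx,b)$, and the same moment evaluations $A_{1}=\int_{0}^{1}\left\vert t^{\alpha }-\lambda \right\vert dt$, $A_{2}=\int_{0}^{1}\left\vert t^{\alpha }-\lambda \right\vert t\,dt$, $A_{3}=A_{1}-A_{2}$. Your closed forms for $A_{2}$ and $A_{3}$ match the stated constants, so the argument is complete.
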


\begin{proof}
We proceed similarly as in the proof Theorem \ref{2.1.1}. From Lemma \ref%
{2.1a}, property of the modulus and using the power-mean inequality we have%
\begin{eqnarray}
&&\left\vert S_{f}\left( mx,\lambda ,\alpha ,ma,mb\right) \right\vert  
\notag \\
&\leq &\frac{m^{\alpha }\left( x-a\right) ^{\alpha +1}}{b-a}\left(
\dint\limits_{0}^{1}\left\vert t^{\alpha }-\lambda \right\vert dt\right) ^{1-%
\frac{1}{q}}\left( \dint\limits_{0}^{1}\left\vert t^{\alpha }-\lambda
\right\vert \left\vert f^{\prime }\left( tmx+m\left( 1-t\right) a\right)
\right\vert ^{q}dt\right) ^{\frac{1}{q}}  \notag \\
&&+\frac{m^{\alpha }\left( b-x\right) ^{\alpha +1}}{b-a}\left(
\dint\limits_{0}^{1}\left\vert t^{\alpha }-\lambda \right\vert dt\right) ^{1-%
\frac{1}{q}}\left( \dint\limits_{0}^{1}\left\vert t^{\alpha }-\lambda
\right\vert \left\vert f^{\prime }\left( tmx+m\left( 1-t\right) b\right)
\right\vert ^{q}dt\right) ^{\frac{1}{q}}.  \label{2-4a}
\end{eqnarray}%
Since $|f^{\prime }|^{q}$ is $m-$convex on $[ma,mb],$ for all $t\in \left[
0,1\right] $%
\begin{equation*}
\left\vert f^{\prime }\left( tmx+m\left( 1-t\right) a\right) \right\vert
^{q}\leq t\left\vert f^{\prime }\left( mx\right) \right\vert ^{q}+m\left(
1-t\right) \left\vert f^{\prime }\left( a\right) \right\vert ^{q}
\end{equation*}%
and%
\begin{equation*}
\left\vert f^{\prime }\left( tmx+m\left( 1-t\right) b\right) \right\vert
^{q}\leq t\left\vert f^{\prime }\left( mx\right) \right\vert ^{q}+m\left(
1-t\right) \left\vert f^{\prime }\left( b\right) \right\vert ^{q}.
\end{equation*}%
Hence by simple computation we get%
\begin{eqnarray}
\dint\limits_{0}^{1}\left\vert t^{\alpha }-\lambda \right\vert \left\vert
f^{\prime }\left( tmx+m\left( 1-t\right) a\right) \right\vert ^{q}dt &\leq
&\dint\limits_{0}^{1}\left\vert t^{\alpha }-\lambda \right\vert t\left\vert
f^{\prime }\left( mx\right) \right\vert ^{q}+m\left( 1-t\right) \left\vert
f^{\prime }\left( a\right) \right\vert ^{q}dt  \notag \\
&=&\left\vert f^{\prime }\left( mx\right) \right\vert ^{q}\left( \frac{%
\alpha \lambda ^{1+\frac{2}{\alpha }}+1}{\alpha +2}-\frac{\lambda }{2}%
\right)   \notag \\
&&+m\left\vert f^{\prime }\left( a\right) \right\vert ^{q}\left( \frac{%
2\alpha \lambda ^{1+\frac{1}{\alpha }}+1}{\alpha +1}-\frac{\alpha \lambda
^{1+\frac{2}{\alpha }}+1}{\alpha +2}-\frac{\lambda }{2}\right) ,
\label{2-4b}
\end{eqnarray}%
and similarly%
\begin{eqnarray}
\dint\limits_{0}^{1}\left\vert t^{\alpha }-\lambda \right\vert \left\vert
f^{\prime }\left( tmx+m\left( 1-t\right) b\right) \right\vert ^{q}dt &\leq
&\dint\limits_{0}^{1}\left\vert t^{\alpha }-\lambda \right\vert t\left\vert
f^{\prime }\left( mx\right) \right\vert ^{q}+m\left( 1-t\right) \left\vert
f^{\prime }\left( b\right) \right\vert ^{q}dt  \notag \\
&=&\left\vert f^{\prime }\left( mx\right) \right\vert ^{q}\left( \frac{%
\alpha \lambda ^{1+\frac{2}{\alpha }}+1}{\alpha +2}-\frac{\lambda }{2}%
\right)   \notag \\
&&+m\left\vert f^{\prime }\left( b\right) \right\vert ^{q}\left( \frac{%
2\alpha \lambda ^{1+\frac{1}{\alpha }}+1}{\alpha +1}-\frac{\alpha \lambda
^{1+\frac{2}{\alpha }}+1}{\alpha +2}-\frac{\lambda }{2}\right) .
\label{2-4c}
\end{eqnarray}%
If we use (\ref{2-4b}), (\ref{2-4c}) and (\ref{2-2d}) in (\ref{2-4a}), we
obtain the desired result. This completes the proof.
\end{proof}

\begin{corollary}
Under the assumptions of Theorem \ref{2.3} with $q=1,$ the inequality (\ref%
{2-4}) reduced to the following inequality%
\begin{eqnarray*}
\left\vert S_{f}\left( mx,\lambda ,\alpha ,ma,mb\right) \right\vert  &\leq
&\left\{ \frac{m^{\alpha }\left( x-a\right) ^{\alpha +1}}{b-a}\left(
\left\vert f^{\prime }\left( mx\right) \right\vert A_{2}\left( \alpha
,\lambda \right) +m\left\vert f^{\prime }\left( a\right) \right\vert
A_{3}\left( \alpha ,\lambda \right) \right) \right.  \\
&&\left. +\frac{m^{\alpha }\left( b-x\right) ^{\alpha +1}}{b-a}\left(
\left\vert f^{\prime }\left( mx\right) \right\vert A_{2}\left( \alpha
,\lambda \right) +m\left\vert f^{\prime }\left( b\right) \right\vert
A_{3}\left( \alpha ,\lambda \right) \right) \right\} .
\end{eqnarray*}
\end{corollary}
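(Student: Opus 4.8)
The plan is simply to specialize the inequality (\ref{2-4}) of Theorem \ref{2.3} to the case $q=1$; no new estimate is required. The parameter $q$ enters (\ref{2-4}) in exactly two places: the leading factor $A_{1}\left( \alpha ,\lambda \right) ^{1-\frac{1}{q}}$, and the outer exponents $\frac{1}{q}$ attached to each of the two bracketed terms. Everything else in the bound, including the expressions $A_{2}\left( \alpha ,\lambda \right) $ and $A_{3}\left( \alpha ,\lambda \right) $, is independent of $q$ and carries over unchanged.

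First I would note that at $q=1$ the exponent $1-\frac{1}{q}$ equals $0$, so that $A_{1}\left( \alpha ,\lambda \right) ^{1-\frac{1}{q}}=A_{1}\left( \alpha ,\lambda \right) ^{0}=1$ and the leading factor disappears. Here one implicitly uses $A_{1}\left( \alpha ,\lambda \right) \geq 0$, which is immediate from (\ref{2-2d}) since $A_{1}\left( \alpha ,\lambda \right) =\int_{0}^{1}\left\vert t^{\alpha }-\lambda \right\vert dt$.

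Next, setting $q=1$ replaces each outer exponent $\frac{1}{q}$ by $1$, so the factors $\left( \left\vert f^{\prime }\left( mx\right) \right\vert ^{q}A_{2}+m\left\vert f^{\prime }\left( a\right) \right\vert ^{q}A_{3}\right) ^{1/q}$ and $\left( \left\vert f^{\prime }\left( mx\right) \right\vert ^{q}A_{2}+m\left\vert f^{\prime }\left( b\right) \right\vert ^{q}A_{3}\right) ^{1/q}$ collapse to $\left\vert f^{\prime }\left( mx\right) \right\vert A_{2}+m\left\vert f^{\prime }\left( a\right) \right\vert A_{3}$ and $\left\vert f^{\prime }\left( mx\right) \right\vert A_{2}+m\left\vert f^{\prime }\left( b\right) \right\vert A_{3}$, respectively. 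Substituting these two simplifications into (\ref{2-4}) gives precisely the asserted bound.

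There is no genuine obstacle in this argument: the corollary is a pure specialization of Theorem \ref{2.3}, and the only step meriting a word of comment is the vanishing of the $A_{1}$-exponent, which rests on the trivial arithmetic fact $1-\frac{1}{q}=0$ when $q=1$ together with the nonnegativity of $A_{1}\left( \alpha ,\lambda \right) $ noted above.
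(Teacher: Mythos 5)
Your proposal is correct and coincides with the paper's (implicit) argument: the corollary is a pure specialization of inequality (\ref{2-4}) at $q=1$, where $A_{1}^{1-\frac{1}{q}}=A_{1}^{0}=1$ and each outer exponent $\frac{1}{q}$ becomes $1$. Your remark that $A_{1}\left( \alpha ,\lambda \right) =\int_{0}^{1}\left\vert t^{\alpha }-\lambda \right\vert dt>0$, which justifies the power $A_{1}^{0}=1$, is a sound (if routine) addition that the paper leaves unstated.
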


\begin{corollary}
Under the assumptions of Theorem \ref{2.3} with $x=\frac{a+b}{2},\ \lambda =%
\frac{1}{3},$from the inequality (\ref{2-4}) we get the following Simpson
type inequality or fractional integrals%
\begin{eqnarray*}
&&\left\vert \frac{2^{\alpha -1}}{m^{\alpha }\left( b-a\right) ^{\alpha -1}}%
S_{f}\left( m\left( \frac{a+b}{2}\right) ,\frac{1}{3},\alpha ,ma,mb\right)
\right\vert  \\
&=&\left\vert \frac{1}{6}\left[ f(ma)+4f\left( \frac{m\left( a+b\right) }{2}%
\right) +f(mb)\right] -\frac{\Gamma \left( \alpha +1\right) 2^{\alpha -1}}{%
m^{\alpha }\left( b-a\right) ^{\alpha }}\left[ J_{\left( \frac{m\left(
a+b\right) }{2}\right) ^{-}}^{\alpha }f(ma)+J_{\left( \frac{m\left(
a+b\right) }{2}\right) ^{+}}^{\alpha }f(mb)\right] \right\vert  \\
&\leq &\frac{m\left( b-a\right) }{4}A_{1}^{1-\frac{1}{q}}\left( \alpha ,%
\frac{1}{3}\right) \left\{ \left( \left\vert f^{\prime }\left( \frac{m\left(
a+b\right) }{2}\right) \right\vert ^{q}A_{2}\left( \alpha ,\frac{1}{3}%
\right) +m\left\vert f^{\prime }\left( a\right) \right\vert ^{q}A_{3}\left(
\alpha ,\frac{1}{3}\right) \right) ^{\frac{1}{q}}\right.  \\
&&\left. +\left( \left\vert f^{\prime }\left( \frac{m\left( a+b\right) }{2}%
\right) \right\vert ^{q}A_{2}\left( \alpha ,\frac{1}{3}\right) +m\left\vert
f^{\prime }\left( b\right) \right\vert ^{q}A_{3}\left( \alpha ,\frac{1}{3}%
\right) \right) ^{\frac{1}{q}}\right\} .
\end{eqnarray*}
\end{corollary}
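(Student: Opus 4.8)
The plan is to obtain this Simpson-type estimate as a pure specialization of Theorem~\ref{2.3}: no new convexity or integration argument is needed, and the whole task reduces to substituting $x=\frac{a+b}{2}$ and $\lambda=\frac{1}{3}$ into inequality~(\ref{2-4}) and then carefully tracking the resulting powers of $2$, $m$, and $(b-a)$.

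First I would record the elementary geometric quantities forced by the midpoint choice. Writing $x=\frac{a+b}{2}$ gives $mx-ma=mb-mx=\frac{m(b-a)}{2}$ and $mb-ma=m(b-a)$, so the two endpoint powers coincide: $(mx-ma)^{\alpha}=(mb-mx)^{\alpha}=\left(\frac{m(b-a)}{2}\right)^{\alpha}=\frac{m^{\alpha}(b-a)^{\alpha}}{2^{\alpha}}$. Substituting these into the definition of $S_{f}\!\left(mx,\frac{1}{3},\alpha,ma,mb\right)$, the first term collapses to $(1-\lambda)\,\frac{m^{\alpha-1}(b-a)^{\alpha-1}}{2^{\alpha-1}}\,f\!\left(\frac{m(a+b)}{2}\right)$, the second term to $\lambda\,\frac{m^{\alpha-1}(b-a)^{\alpha-1}}{2^{\alpha}}\bigl(f(ma)+f(mb)\bigr)$, while the fractional part keeps its two Riemann--Liouville operators anchored at $\frac{m(a+b)}{2}$.

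Next I would multiply $S_{f}$ through by the normalising constant $\frac{2^{\alpha-1}}{m^{\alpha-1}(b-a)^{\alpha-1}}$, chosen precisely so that every extraneous power of $2$, $m$, and $(b-a)$ cancels. With $\lambda=\frac{1}{3}$ the midpoint weight becomes $(1-\lambda)=\frac{2}{3}=\frac{4}{6}$ and each endpoint acquires weight $\lambda\cdot\frac{1}{2}=\frac{1}{6}$, which is exactly the Simpson combination $\frac{1}{6}\bigl[f(ma)+4f\!\left(\frac{m(a+b)}{2}\right)+f(mb)\bigr]$; simultaneously the fractional term turns into $\frac{\Gamma(\alpha+1)2^{\alpha-1}}{m^{\alpha}(b-a)^{\alpha}}\bigl[J_{(\cdot)^{-}}^{\alpha}f(ma)+J_{(\cdot)^{+}}^{\alpha}f(mb)\bigr]$, matching the claimed left-hand side.

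Finally I would apply the same constant to the right-hand side of~(\ref{2-4}). Since $(x-a)^{\alpha+1}=(b-x)^{\alpha+1}=\frac{(b-a)^{\alpha+1}}{2^{\alpha+1}}$, each bracket $\frac{m^{\alpha}(x-a)^{\alpha+1}}{b-a}$ equals $\frac{m^{\alpha}(b-a)^{\alpha}}{2^{\alpha+1}}$; combining with the normalising constant $\frac{2^{\alpha-1}}{m^{\alpha-1}(b-a)^{\alpha-1}}$ produces the overall prefactor $\frac{m(b-a)}{4}$, leaving $A_{1}^{1-\frac{1}{q}}\!\left(\alpha,\frac{1}{3}\right)$ untouched and the two factors $\bigl(\left\vert f'\right\vert^{q}A_{2}+m\left\vert f'\right\vert^{q}A_{3}\bigr)^{1/q}$ of Theorem~\ref{2.3} evaluated at $\lambda=\frac{1}{3}$. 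The only genuine care required is this constant bookkeeping: one must check that the exponents of $2$, $m$, and $(b-a)$ balance consistently on both sides, and it is this requirement that pins down the exact normalising constant and the $\frac{m(b-a)}{4}$ prefactor. Beyond that verification the statement follows immediately from Theorem~\ref{2.3}.
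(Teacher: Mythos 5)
Your proposal is correct and takes exactly the approach the paper intends: the corollary is a pure specialization of Theorem \ref{2.3}, obtained by substituting $x=\frac{a+b}{2}$, $\lambda=\frac{1}{3}$ into inequality (\ref{2-4}) and tracking the powers of $2$, $m$ and $(b-a)$, which is precisely your computation. Worth noting: your bookkeeping pins down the normalizing constant as $\frac{2^{\alpha-1}}{m^{\alpha-1}\left( b-a\right) ^{\alpha-1}}$, which makes both the Simpson identity and the $\frac{m\left( b-a\right) }{4}$ prefactor come out consistently, and thereby exposes the factor $\frac{2^{\alpha-1}}{m^{\alpha}\left( b-a\right) ^{\alpha-1}}$ printed in the first line of the paper's corollary as a typo (off by one power of $m$).
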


\begin{corollary}
Under the assumptions of Theorem \ref{2.3} with $x=\frac{a+b}{2},\ \lambda
=0,$from the inequality (\ref{2-4}) we get the following midpoint type
inequality or fractional integrals%
\begin{eqnarray*}
&&\frac{2^{\alpha -1}}{m^{\alpha }\left( b-a\right) ^{\alpha -1}}\left\vert
S_{f}\left( m\left( \frac{a+b}{2}\right) ,0,\alpha ,ma,mb\right) \right\vert 
\\
&=&\left\vert f\left( \frac{m\left( a+b\right) }{2}\right) -\frac{\Gamma
\left( \alpha +1\right) 2^{\alpha -1}}{m^{\alpha }\left( b-a\right) ^{\alpha
}}\left[ J_{\left( \frac{m\left( a+b\right) }{2}\right) ^{-}}^{\alpha
}f(ma)+J_{\left( \frac{m\left( a+b\right) }{2}\right) ^{+}}^{\alpha }f(mb)%
\right] \right\vert  \\
&\leq &\frac{m\left( b-a\right) }{4}\left( \frac{1}{\alpha +1}\right) \left( 
\frac{1}{\alpha +2}\right) ^{\frac{1}{q}}\left\{ \left[ \left( \alpha
+1\right) \left\vert f^{\prime }\left( \frac{m\left( a+b\right) }{2}\right)
\right\vert ^{q}+m\left\vert f^{\prime }\left( a\right) \right\vert ^{q}%
\right] ^{\frac{1}{q}}\right.  \\
&&\left. +\left[ \left( \alpha +1\right) \left\vert f^{\prime }\left( \frac{%
m\left( a+b\right) }{2}\right) \right\vert ^{q}+m\left\vert f^{\prime
}\left( b\right) \right\vert ^{q}\right] ^{\frac{1}{q}}\right\} .
\end{eqnarray*}
\end{corollary}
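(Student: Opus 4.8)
The plan is to obtain this corollary as a direct specialization of inequality (\ref{2-4}) in Theorem \ref{2.3}, taking $x=\frac{a+b}{2}$ and $\lambda=0$ and then simplifying the auxiliary constants and the geometric factors. The first step is to evaluate $A_1,A_2,A_3$ at $\lambda=0$. Since every summand of these constants that carries a positive power of $\lambda$ vanishes, we get $A_1(\alpha,0)=\frac{1}{\alpha+1}$, $A_2(\alpha,0)=\frac{1}{\alpha+2}$, and $A_3(\alpha,0)=\frac{1}{\alpha+1}-\frac{1}{\alpha+2}=\frac{1}{(\alpha+1)(\alpha+2)}$. These three evaluations are the only substantive computations feeding the constant on the right-hand side.

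Next I would simplify the geometric data. With $x=\frac{a+b}{2}$ one has $x-a=b-x=\frac{b-a}{2}$, so $(x-a)^{\alpha+1}=(b-x)^{\alpha+1}=\frac{(b-a)^{\alpha+1}}{2^{\alpha+1}}$ and hence $\frac{m^{\alpha}(x-a)^{\alpha+1}}{b-a}=\frac{m^{\alpha}(b-x)^{\alpha+1}}{b-a}=\frac{m^{\alpha}(b-a)^{\alpha}}{2^{\alpha+1}}$. On the left-hand side I would write out $S_f(m\frac{a+b}{2},0,\alpha,ma,mb)$ explicitly: at $\lambda=0$ the $\lambda$-weighted endpoint term disappears, the nodes $ma,mb$ give $(m\frac{a+b}{2}-ma)^{\alpha}+(mb-m\frac{a+b}{2})^{\alpha}=\frac{m^{\alpha}(b-a)^{\alpha}}{2^{\alpha-1}}$, and division by $mb-ma=m(b-a)$ produces the coefficient of $f(\frac{m(a+b)}{2})$. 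Multiplying the whole inequality by the indicated normalizing factor then collapses that coefficient to unity and gives the fractional-integral term its displayed weight $\frac{\Gamma(\alpha+1)2^{\alpha-1}}{m^{\alpha}(b-a)^{\alpha}}$, which is exactly the midpoint quantity on the first two lines of the statement.

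Finally I would assemble the right-hand side. Using the constants above, each inner bracket simplifies as $|f'(\frac{m(a+b)}{2})|^{q}A_2(\alpha,0)+m|f'(a)|^{q}A_3(\alpha,0)=\frac{1}{(\alpha+1)(\alpha+2)}\left[(\alpha+1)|f'(\frac{m(a+b)}{2})|^{q}+m|f'(a)|^{q}\right]$, and analogously with $f'(b)$ in place of $f'(a)$. Taking the $\frac{1}{q}$ power extracts the factor $\left(\frac{1}{(\alpha+1)(\alpha+2)}\right)^{1/q}$, and combining it with $A_1(\alpha,0)^{1-1/q}=\left(\frac{1}{\alpha+1}\right)^{1-1/q}$ yields $\left(\frac{1}{\alpha+1}\right)^{1-1/q}\left(\frac{1}{(\alpha+1)(\alpha+2)}\right)^{1/q}=\frac{1}{\alpha+1}\left(\frac{1}{\alpha+2}\right)^{1/q}$. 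Together with the geometric prefactor $\frac{m^{\alpha}(b-a)^{\alpha}}{2^{\alpha+1}}$, the same normalization that produced the midpoint quantity reduces this to the constant $\frac{m(b-a)}{4}\left(\frac{1}{\alpha+1}\right)\left(\frac{1}{\alpha+2}\right)^{1/q}$, giving the stated bound. The only real obstacle is bookkeeping: one must track the powers of $2$, $m$ and $(b-a)$ consistently through the normalization, and confirm the exponent identity $\left(\frac{1}{\alpha+1}\right)^{1-1/q}\left(\frac{1}{\alpha+1}\right)^{1/q}=\frac{1}{\alpha+1}$ so that the mixed power-mean constant collapses cleanly into the displayed form.
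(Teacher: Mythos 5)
Your derivation follows exactly the route the paper intends: the corollary is stated without a written proof, as a direct specialization of inequality (\ref{2-4}) in Theorem \ref{2.3} at $x=\frac{a+b}{2}$, $\lambda =0$, and your substantive computations are right — $A_{1}\left( \alpha ,0\right) =\frac{1}{\alpha +1}$, $A_{2}\left( \alpha ,0\right) =\frac{1}{\alpha +2}$, $A_{3}\left( \alpha ,0\right) =\frac{1}{\left( \alpha +1\right) \left( \alpha +2\right) }$, the factorization of the inner brackets, and the exponent collapse $\left( \frac{1}{\alpha +1}\right) ^{1-\frac{1}{q}}\left( \frac{1}{\alpha +1}\right) ^{\frac{1}{q}}=\frac{1}{\alpha +1}$.

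However, the step you deferred to ``bookkeeping'' is precisely where the argument as written fails. With the normalizing factor actually printed in the statement, namely $\frac{2^{\alpha -1}}{m^{\alpha }\left( b-a\right) ^{\alpha -1}}$, the coefficient of $f\left( \frac{m\left( a+b\right) }{2}\right) $ becomes $\frac{2^{\alpha -1}}{m^{\alpha }\left( b-a\right) ^{\alpha -1}}\cdot \frac{m^{\alpha -1}\left( b-a\right) ^{\alpha -1}}{2^{\alpha -1}}=\frac{1}{m}$, not $1$; the fractional-integral term acquires weight $\frac{\Gamma \left( \alpha +1\right) 2^{\alpha -1}}{m^{\alpha +1}\left( b-a\right) ^{\alpha }}$, not $\frac{\Gamma \left( \alpha +1\right) 2^{\alpha -1}}{m^{\alpha }\left( b-a\right) ^{\alpha }}$; and the right-hand constant comes out as $\frac{2^{\alpha -1}}{m^{\alpha }\left( b-a\right) ^{\alpha -1}}\cdot \frac{m^{\alpha }\left( b-a\right) ^{\alpha }}{2^{\alpha +1}}=\frac{b-a}{4}$, not $\frac{m\left( b-a\right) }{4}$. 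Your two collapse claims hold simultaneously only for the factor $\frac{2^{\alpha -1}}{m^{\alpha -1}\left( b-a\right) ^{\alpha -1}}$; each is off by $\frac{1}{m}$ for the printed one. In other words, the corollary's first line contains a typo ($m^{\alpha }$ should be $m^{\alpha -1}$): the displayed equality between the first and second lines fails by a factor of $m$ whenever $m<1$, although the inequality between the second and third lines — the actual content — is still correct, because under the printed normalization both sides are scaled by the same factor $\frac{1}{m}$. Your write-up asserts that the printed factor performs the collapse that only the corrected factor performs, so to complete the proof you must either correct the prefactor or carry the powers of $m$ explicitly through the normalization and flag the discrepancy.
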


\begin{corollary}
Under the assumptions of Theorem \ref{2.3} with$\ \lambda =1,$from the
inequality (\ref{2-4}) we get the following trapezoid type inequality or
fractional integrals%
\begin{eqnarray*}
&&\left\vert S_{f}\left( mx,1,\alpha ,ma,mb\right) \right\vert  \\
&=&\left\vert \frac{\left( x-a\right) ^{\alpha }f(ma)+\left( b-x\right)
^{\alpha }f(mb)}{b-a}-\frac{\Gamma \left( \alpha +1\right) }{m^{\alpha
+1}\left( b-a\right) }\left[ J_{mx^{-}}^{\alpha }f(ma)+J_{mx^{+}}^{\alpha
}f(mb)\right] \right\vert  \\
&\leq &\left( \frac{\alpha }{\alpha +1}\right) \left( \frac{1}{2\left(
\alpha +2\right) }\right) ^{\frac{1}{q}}\left\{ \frac{\left( x-a\right)
^{\alpha +1}}{b-a}\left[ \left( \alpha +1\right) \left\vert f^{\prime
}\left( mx\right) \right\vert ^{q}+\left( \alpha +3\right) m\left\vert
f^{\prime }\left( a\right) \right\vert \right] ^{\frac{1}{q}}\right.  \\
&&\left. +\frac{\left( b-x\right) ^{\alpha +1}}{b-a}\left[ \left( \alpha
+1\right) \left\vert f^{\prime }\left( mx\right) \right\vert ^{q}+\left(
\alpha +3\right) m\left\vert f^{\prime }\left( b\right) \right\vert \right]
^{\frac{1}{q}}\right\} .
\end{eqnarray*}
\end{corollary}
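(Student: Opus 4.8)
The plan is to obtain this corollary as the pure specialization $\lambda = 1$ of Theorem \ref{2.3}, so that no new analytic input is required; the work lies entirely in evaluating the constants $A_1,A_2,A_3$ at $\lambda=1$ and in simplifying the left-hand quantity $S_f(mx,1,\alpha,ma,mb)$. First I would set $\lambda=1$ in the three auxiliary expressions of Theorem \ref{2.3}. A direct substitution gives
\[
A_1(\alpha,1)=\frac{\alpha}{\alpha+1},\qquad A_2(\alpha,1)=\frac{\alpha}{2(\alpha+2)},
\]
and, using the identity $A_3=A_1-A_2$,
\[
A_3(\alpha,1)=\frac{\alpha(\alpha+3)}{2(\alpha+1)(\alpha+2)}.
\]

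Next I would insert these values into the right-hand side of (\ref{2-4}). In each factor of the form $\left(|f'(mx)|^q A_2+m|f'(a)|^q A_3\right)^{1/q}$ I would pull out the common factor $\frac{\alpha}{2(\alpha+1)(\alpha+2)}$, rewriting the bracket as $(\alpha+1)|f'(mx)|^q+(\alpha+3)m|f'(a)|^q$ (and analogously with $b$). The prefactor $A_1(\alpha,1)^{1-\frac{1}{q}}=\left(\frac{\alpha}{\alpha+1}\right)^{1-\frac{1}{q}}$ then combines with the extracted $\left(\frac{\alpha}{2(\alpha+1)(\alpha+2)}\right)^{1/q}$; since the exponents $1-\frac{1}{q}$ and $\frac{1}{q}$ sum to $1$, the $\alpha$ and $\alpha+1$ factors reassemble into first powers, leaving exactly the stated coefficient $\frac{\alpha}{\alpha+1}\left(\frac{1}{2(\alpha+2)}\right)^{1/q}$. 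This exponent bookkeeping is the one place where care is needed, but it is purely algebraic.

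Finally I would simplify the left side. Putting $\lambda=1$ in the definition of $S_f$ annihilates the convex-combination term carrying $f(x)$, so that $S_f(mx,1,\alpha,ma,mb)$ collapses to the trapezoidal boundary expression minus the Riemann--Liouville term; rewriting the scaled arguments through the homogeneity relations $(mx-ma)^\alpha=m^\alpha(x-a)^\alpha$, $(mb-mx)^\alpha=m^\alpha(b-x)^\alpha$ and $mb-ma=m(b-a)$ produces the displayed identity. I expect no genuine obstacle: the inequality is a substitution, and the only subtlety is tracking the powers of $m$ that arise from scaling the endpoints and the fractional-integral operators.
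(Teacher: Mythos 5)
Your proposal is correct and is exactly the paper's (unstated) argument: the corollary is a pure substitution of $\lambda=1$ into (\ref{2-4}), and your evaluations $A_{1}(\alpha,1)=\tfrac{\alpha}{\alpha+1}$, $A_{2}(\alpha,1)=\tfrac{\alpha}{2(\alpha+2)}$, $A_{3}(\alpha,1)=A_{1}-A_{2}=\tfrac{\alpha(\alpha+3)}{2(\alpha+1)(\alpha+2)}$, the factoring out of $\tfrac{\alpha}{2(\alpha+1)(\alpha+2)}$, and the recombination of the exponents $1-\tfrac{1}{q}$ and $\tfrac{1}{q}$ all check out. The residual mismatches with the printed statement are typos in the paper which your computation in fact exposes: dividing both sides of (\ref{2-4}) by $m^{\alpha}$ (which is what removes the $m^{\alpha}$ factors from the right-hand side) forces a factor $\tfrac{1}{m}$ on the trapezoidal term of the left-hand side that the paper omits, and the quantities $\left\vert f^{\prime}(a)\right\vert$, $\left\vert f^{\prime}(b)\right\vert$ in the printed brackets should carry the exponent $q$, as you correctly write.
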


\begin{corollary}
Let the assumptions of Theorem \ref{2.3} hold. If $\ \left\vert f^{\prime
}(u)\right\vert \leq M$ for all $u\in \left[ ma,mb\right] $ and $\lambda =0,$
then from the inequality (\ref{2-4}) we get the following Ostrowski type
inequality or fractional integrals%
\begin{eqnarray*}
&&\left\vert \left[ \frac{\left( x-a\right) ^{\alpha }+\left( b-x\right)
^{\alpha }}{b-a}\right] f(mx)-\frac{\Gamma \left( \alpha +1\right) }{%
m^{\alpha +1}\left( b-a\right) }\left[ J_{mx^{-}}^{\alpha
}f(ma)+J_{mx^{+}}^{\alpha }f(mb)\right] \right\vert  \\
&\leq &mM\left( \frac{1}{\alpha +1}\right) \left( \frac{\alpha +m+1}{\alpha
+2}\right) ^{\frac{1}{q}}\left[ \frac{\left( x-a\right) ^{\alpha +1}+\left(
b-x\right) ^{\alpha +1}}{b-a}\right] ,
\end{eqnarray*}%
for each $x\in \left[ a,b\right] .$
\end{corollary}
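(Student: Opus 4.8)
The plan is to obtain this Ostrowski-type bound as a direct specialization of Theorem \ref{2.3}: set $\lambda=0$ in inequality (\ref{2-4}) and then replace each value of $|f'|$ by the uniform bound $M$. First I would unfold the left-hand side. By the definition of $S_f$ applied to the triple $(ma,mb)$ with base point $mx$, the term carrying the coefficient $(1-\lambda)$ survives while the $\lambda$-term drops out at $\lambda=0$; after using $mx-ma=m(x-a)$, $mb-mx=m(b-x)$ and $mb-ma=m(b-a)$ one recognizes the displayed combination of $\left[\frac{(x-a)^{\alpha}+(b-x)^{\alpha}}{b-a}\right]f(mx)$ and the two Riemann--Liouville integrals $J_{mx^{-}}^{\alpha}f(ma)$, $J_{mx^{+}}^{\alpha}f(mb)$, up to the indicated powers of $m$.

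Next I would evaluate the three constants at $\lambda=0$. Since every term of $A_1,A_2,A_3$ carrying a factor $\lambda$ or $\lambda^{1+\cdots}$ vanishes, one gets $A_1(\alpha,0)=\frac{1}{\alpha+1}$, $A_2(\alpha,0)=\frac{1}{\alpha+2}$, and $A_3(\alpha,0)=\frac{1}{\alpha+1}-\frac{1}{\alpha+2}=\frac{1}{(\alpha+1)(\alpha+2)}$. Then, because the arguments $tmx+m(1-t)a$ and $tmx+m(1-t)b$ both range within $[ma,mb]$ as $t$ runs over $[0,1]$, the hypothesis $|f'(u)|\le M$ applies and I would bound $|f'(mx)|^{q}$, $|f'(a)|^{q}$, $|f'(b)|^{q}$ all by $M^{q}$. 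Each inner factor of (\ref{2-4}) then becomes
\[
\left(|f'(mx)|^{q}A_2+m|f'(\cdot)|^{q}A_3\right)^{1/q}\le M\left(\frac{1}{\alpha+2}+\frac{m}{(\alpha+1)(\alpha+2)}\right)^{1/q}=M\left(\frac{\alpha+m+1}{(\alpha+1)(\alpha+2)}\right)^{1/q},
\]
which is the same for both the $(x-a)$ and the $(b-x)$ pieces, so the two summands combine into the single bracket $\frac{(x-a)^{\alpha+1}+(b-x)^{\alpha+1}}{b-a}$.

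Finally I would merge this with the prefactor $A_1(\alpha,0)^{1-1/q}=(\alpha+1)^{-(1-1/q)}$ and simplify the exponents: since $(\alpha+1)^{-(1-1/q)}\cdot(\alpha+1)^{-1/q}=(\alpha+1)^{-1}$, the multiplicative constant collapses to $\frac{1}{\alpha+1}\left(\frac{\alpha+m+1}{\alpha+2}\right)^{1/q}$, which is the asserted form. The only genuine obstacle here is bookkeeping rather than mathematics: one must carefully track the powers of $m$ coming from $(mx-ma)^{\alpha}=m^{\alpha}(x-a)^{\alpha}$ and from the prefactor $\frac{m^{\alpha}}{b-a}$ in (\ref{2-4}) so that they reconcile with the normalization written on the left-hand side (in particular the $m^{\alpha+1}$ in the denominator of the fractional-integral term), and one should verify that the uniform bound is legitimately applied to $|f'(a)|$ and $|f'(b)|$ even though, for $m<1$, the endpoints $a,b$ need not lie in $[ma,mb]$ — here it is cleanest to bound $|f'|$ directly along the segments $[ma,mx]$ and $[mx,mb]$, both of which are contained in $[ma,mb]$.
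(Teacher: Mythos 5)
Your first two paragraphs reproduce exactly the derivation the paper intends (the paper states this corollary as a direct substitution into Theorem \ref{2.3}, with no separate proof): set $\lambda=0$ in (\ref{2-4}), evaluate $A_{1}(\alpha,0)=\frac{1}{\alpha+1}$, $A_{2}(\alpha,0)=\frac{1}{\alpha+2}$, $A_{3}(\alpha,0)=\frac{1}{(\alpha+1)(\alpha+2)}$, bound $|f^{\prime}(mx)|,|f^{\prime}(a)|,|f^{\prime}(b)|$ by $M$, and collapse $(\alpha+1)^{-(1-1/q)}(\alpha+1)^{-1/q}=(\alpha+1)^{-1}$; that arithmetic is all correct. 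The genuine gap is the escape route you commit to in your last paragraph. You rightly observe that for $m<1$ the points $a,b$ need not lie in $[ma,mb]$, so the hypothesis does not give $|f^{\prime}(a)|\le M$, $|f^{\prime}(b)|\le M$; but your replacement, bounding $|f^{\prime}|$ pointwise along $[ma,mx]$ and $[mx,mb]$ \emph{inside} the integrals, yields $\int_{0}^{1}t^{\alpha}|f^{\prime}(\cdot)|^{q}dt\le\frac{M^{q}}{\alpha+1}$ and hence the final constant $\frac{mM}{\alpha+1}$, with no factor $\left(\frac{\alpha+m+1}{\alpha+2}\right)^{1/q}$. Since $\frac{\alpha+m+1}{\alpha+2}<1$ for $m<1$, this proves a strictly weaker inequality than the one claimed: the claimed factor arises only from the $m$-convexity splitting $|f^{\prime}(tmx+m(1-t)a)|^{q}\le t|f^{\prime}(mx)|^{q}+m(1-t)|f^{\prime}(a)|^{q}$ followed by the very bounds $|f^{\prime}(a)|\le M$, $|f^{\prime}(b)|\le M$ that you declined to use. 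So, as written, your proof does not establish the stated inequality.

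Moreover, no argument can close this gap from the hypothesis as printed, because the statement is then false. Take $q=1$, $f^{\prime}(u)=Ku$ with $K>0$ (so $|f^{\prime}|^{q}$ is $m$-convex and the splitting above is an equality), $x=b$, and $a>mb$. Then $M=\sup_{u\in[ma,mb]}|f^{\prime}(u)|=Kmb$, and by Lemma \ref{2.1a} the corollary reduces (under either normalization of the left-hand side) to the claim
\begin{equation*}
\int_{0}^{1}t^{\alpha}f^{\prime}\left( tmb+m(1-t)a\right) dt\leq M\,\frac{\alpha+m+1}{(\alpha+1)(\alpha+2)},
\end{equation*}
whereas the integral equals $Km\,\frac{(\alpha+1)b+a}{(\alpha+1)(\alpha+2)}$, which exceeds $Kmb\,\frac{(\alpha+1)+m}{(\alpha+1)(\alpha+2)}$ precisely when $a>mb$. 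The honest repair is to strengthen the hypothesis so that the bound $M$ also applies at $a$ and $b$ (for instance $|f^{\prime}(u)|\le M$ on $[ma,b]\supseteq[ma,mb]\cup\{a,b\}$); with that reading, your second paragraph goes through verbatim and gives the stated constant. Finally, the $m$-bookkeeping you deferred does not in fact reconcile: dividing $S_{f}(mx,0,\alpha,ma,mb)$ by $m^{\alpha-1}$ produces $\frac{\Gamma(\alpha+1)}{m^{\alpha}(b-a)}$ in front of the fractional integrals, not $\frac{\Gamma(\alpha+1)}{m^{\alpha+1}(b-a)}$ as printed; that exponent is a typo in the paper (present also in the preceding trapezoid corollary), and a complete proof should correct it rather than leave it as bookkeeping.
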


\end{document}